\theoremstyle{thmstyleone}%
\newtheorem{theorem}{Theorem}%  meant for continuous numbers
\theoremstyle{thmstyletwo}%
\newtheorem{example}{Example}%
\newtheorem{remark}{Remark}%
\newtheorem{lemma}{Lemma}%
\newtheorem{corollary}{Corollary}%
\theoremstyle{thmstylethree}%
\begin{document}

\title[On blow-up conditions]{On blow-up conditions for nonlinear higher order evolution inequalities}

%%=============================================================%%
%% Prefix	-> \pfx{Dr}
%% GivenName	-> \fnm{Joergen W.}
%% Particle	-> \spfx{van der} -> surname prefix
%% FamilyName	-> \sur{Ploeg}
%% Suffix	-> \sfx{IV}
%% NatureName	-> \tanm{Poet Laureate} -> Title after name
%% Degrees	-> \dgr{MSc, PhD}
%% \author*[1,2]{\pfx{Dr} \fnm{Joergen W.} \spfx{van der} \sur{Ploeg} \sfx{IV} \tanm{Poet Laureate} 
%%                 \dgr{MSc, PhD}}\email{iauthor@gmail.com}
%%=============================================================%%

\author*[1,2]{\fnm{A. A.} \sur{Kon'kov}}\email{konkov@mech.math.msu.su}

\author[2]{\fnm{A. E.} \sur{Shishkov}}\email{aeshkv@yahoo.com}
\equalcont{These authors contributed equally to this work.}

\affil*[1]{\orgdiv{Department of Differential Equations}, \orgname{Faculty of Mechanics and Mathematics, Lo\-mo\-no\-sov Mo\-s\-cow State University}, \orgaddress{\street{Vorobyovy Gory}, \city{Moscow}, \postcode{119991}, \country{Russia}}}

\affil[2]{\orgdiv{Center of Nonlinear Problems of Mathematical Physics}, \orgname{RUDN University}, \orgaddress{\street{Miklukho-Maklaya str. 6}, \city{Moscow}, \postcode{117198}, \country{Russia}}}

\abstract{
For the problem
$$
	\left\{
		\begin{aligned}
			&
			\partial_t^k u
			-
			\sum_{|\alpha| = m}
			\partial^\alpha
			a_\alpha (x, t, u)
			\ge
			f (|u|)
			\quad
			\mbox{in } 
			{\mathbb R}_+^{n+1} = {\mathbb R}^n \times (0, \infty),
			\\
			&
			u (x, 0) = u_0 (x),
			\:
			\partial_t u (x, 0) = u_1 (x),
			\ldots,
			\partial_t^{k-1} u (x, 0) = u_{k-1} (x) \ge 0,
		\end{aligned}
	\right.
$$
where $u_i \in L_{1, loc} ({\mathbb R}^n)$ and $a_\alpha$ are Caratheodory functions such that
$$
	|a_\alpha (x, t, \zeta)| \le A |\zeta|^p,
	\quad
	A, p = const > 0,
$$
for almost all $(x, t) \in {\mathbb R}_+^{n+1}$ and for all $\zeta \in {\mathbb R}$, we obtain exact conditions on the function $f$ guaranteeing that any global weak solution is identically zero.
}

\keywords{Higher order differential evolution inequalities, Nonlinearity, Blow-up}

%%\pacs[JEL Classification]{D8, H51}

\pacs[MSC Classification]{35B44, 35B08, 35J30, 35J70}

\maketitle

\section{Introduction}\label{sec1}

We study solutions of the inequality
\begin{equation}
		\partial_t^k u
		-
		{\mathcal L} u
		\ge
		f (|u|)
		\quad
		\mbox{in } 
		{\mathbb R}_+^{n+1} = {\mathbb R}^n \times (0, \infty)
	\label{1.1i}
\end{equation}
satisfying the initial conditions
\begin{equation}
	u (x, 0) = u_0 (x),
	\partial_t u (x, 0) = u_1 (x),
	\ldots,
	\partial_t^{k-1} u (x, 0) = u_{k-1} (x) \ge 0,
	\label{1.1c}
\end{equation}
where $k \ge 1$ and $n \ge 1$ are integers and 
$$
	{\mathcal L} u
	=
	\sum_{|\alpha| = m}
	\partial^\alpha
	a_\alpha (x, t, u)
$$
is a differential operator of order $m \ge 1$ whose coefficients are Caratheodory functions such that
$$
	|a_\alpha (x, t, \zeta)| \le A |\zeta|^p,
	\quad
	|\alpha| = m,
$$
with some constants $A > 0$ and $p > 0$ for almost all 
$(x, t) \in {\mathbb R}_+^{n+1}$ and for all $\zeta \in {\mathbb R}$.
As is customary, by $\alpha$ we mean a multi-index $\alpha = {(\alpha_1, \ldots, \alpha_n)}$. 
In so doing,
$|\alpha| = \alpha_1 + \ldots + \alpha_n$ 
and
$
	\partial^\alpha 
	= 
	{\partial^{|\alpha|} / (\partial_{x_1}^{\alpha_1} \ldots \partial_{x_n}^{\alpha_n})}.
$

Let us denote by $B_r$ the open ball in ${\mathbb R}^n$ of radius $r > 0$ and center at zero. Also let $Q_r^\varkappa = B_r \times (0, r^\varkappa)$, $\varkappa > 0$.
We say that a function $u$ belongs to $L_{\lambda, loc} ({\mathbb R}^n)$, $0 < \lambda \le \infty$, if $u \in L_\lambda (B_r)$ for all real numbers $r > 0$. 
In its turn, $u \in L_{\lambda, loc} ({\mathbb R}^{n+1}_+)$ if $u \in L_\lambda (Q_r^1)$ for all real numbers $r > 0$. 

We assume that $u_i \in L_{1, loc} ({\mathbb R}^n)$, $i = 0, \ldots, k - 1$. Also let  $f (\zeta)$ and $f (\zeta^{1/p})$ are non-decreasing convex functions on the interval $[0, \infty)$ such that $f (\zeta) > 0$ for all $\zeta \in (0, \infty)$.

A function 
$
	u 
	\in 
	{L_{1, loc} ({\mathbb R}_+^{n+1})}
	\cap
	{L_{p, loc} ({\mathbb R}_+^{n+1})}
$
is called a global weak solution of problem~\eqref{1.1i}, \eqref{1.1c}
if ${f (|u|)} \in {L_{1, loc} ({\mathbb R}^{n+1}_+)}$ and
\begin{align}
	&
	\int_{
		{\mathbb R}_+^{n+1}
	}
	(-1)^k
	u
	\partial_t^k \varphi
	dx
	dt
	-
	\int_{
		{\mathbb R}_+^{n+1}
	}
	\sum_{|\alpha| = m}
	(- 1)^m
	a_\alpha (x, t, u)
	\partial^\alpha
	\varphi
	dx
	dt
	\nonumber
	\\
	&
	\qquad
	\ge
	\int_{
		{\mathbb R}^n
	}
	\sum_{i=0}^{k-1}
	(-1)^i
	u_{k - 1 - i} (x)
	\partial_t^i \varphi (x, 0)
	dx
	+
	\int_{
		{\mathbb R}_+^{n+1}
	}
	f (|u|)
	\varphi
	dx
	dt
	\label{1.2}
\end{align}
for any non-negative function $\varphi \in C_0^\infty ({\mathbb R}^{n+1})$.
Analogously, a function 
$
	u 
	\in 
	{L_{1, loc} ({\mathbb R}_+^{n+1})}
	\cap
	{L_{p, loc} ({\mathbb R}_+^{n+1})}
$
is a global weak solution of the equation
\begin{equation}
	\partial_t^k u
	-
	{\mathcal L} u
	=
	f (|u|)
	\quad
	\mbox{in } 
	{\mathbb R}_+^{n+1}
	\label{1.3e}
\end{equation}
satisfying conditions~\eqref{1.1c} if ${f (|u|)} \in {L_{1, loc} ({\mathbb R}^{n+1}_+)}$ and
\begin{align*}
	&
	\int_{
		{\mathbb R}_+^{n+1}
	}
	(-1)^k
	u
	\partial_t^k \varphi
	dx
	dt
	-
	\int_{
		{\mathbb R}_+^{n+1}
	}
	\sum_{|\alpha| = m}
	(- 1)^m
	a_\alpha (x, t, u)
	\partial^\alpha
	\varphi
	dx
	dt
	\nonumber
	\\
	&
	\qquad
	=
	\int_{
		{\mathbb R}^n
	}
	\sum_{i=0}^{k-1}
	(-1)^i
	u_{k - 1 - i} (x)
	\partial_t^i \varphi (x, 0)
	dx
	+
	\int_{
		{\mathbb R}_+^{n+1}
	}
	f (|u|)
	\varphi
	dx
	dt
\end{align*}
for any $\varphi \in C_0^\infty ({\mathbb R}^{n+1})$.
It is obvious that every global weak solution of~\eqref{1.3e}, \eqref{1.1c} is also a global weak solution of~\eqref{1.1i}, \eqref{1.1c}.

As an example of~\eqref{1.3e}, we can take the nonlinear diffusion equation
$$
	u_t = \Delta u^p + f (u)
	\quad
	\mbox{in } 
	{\mathbb R}_+^{n+1}
$$
which can be represented in the form
\begin{equation}
	u_t 
	=
	\operatorname{div}
	\left(
		D
		\nabla u
	\right)
	+
	f (u)
	\quad
	\mbox{in } 
	{\mathbb R}_+^{n+1},
	\label{1.6}
\end{equation}
where $D = p u^{p - 1}$ is a diffusion coefficient depending on the density $u > 0$ 
in a power-law manner and $f (u)$ is a source-density function.
In the case of $p \ge 1$, it is customary to say that~\eqref{1.6} is the slow diffusion equation. In its turn, if $0 < p < 1$, then one says that~\eqref{1.6} is the fast diffusion equation.

The absence of non-trivial solutions of differential equations and inequalities or, in other words, the blow-up phenomenon was studied by many authors~\cite{CPX, CG, EGKPCR, EGKPMN, F, GS, H, Kato, Keller, KK, KV, mePRSE, Nonlinearity, meIzv, LS, MPbook, Osserman, PaTa, PoTe, PoVe, PS, SGKM, SV}.
In most cases, these studies were limited to power-law nonlinearities in the right-hand side of inequality~\eqref{1.1i} or dealt with second-order differential operators.
In our paper, we deal with 
%nonlinearities of 
a much wider class 
%than power-law nonlinearities 
of differential inequalities.
Our aim is to obtain exact conditions guaranteeing that any global weak solution of~\eqref{1.1i}, \eqref{1.1c} is trivial, i.e. is equal to zero almost everywhere in ${\mathbb R}_+^{n+1}$.

In paper~\cite{LS}, in the partial case where $k = 1$ and ${\mathcal L} = \Delta$, it was obtained a  blow-up condition of a Dini-type structure that strengthens the well-known result of Fujita--Hayakawa. Moreover, it was shown in~\cite{LS} that the validity of this condition is necessary for the non-existence of non-trivial global solutions.
Our general condition~\eqref{T2.1.1} coincides with that in~\cite{LS} in the mentioned partial case.
Let us note that, in contrast to~\cite{LS}, we impose no ellipticity conditions on the differential operator ${\mathcal L}$ and no growth conditions on the initial data of the Cauchy problem and on a solution of this problem itself.
In so doing, we have reason to believe that~\eqref{T2.1.1} is exact in much more general case.
For example, in the case of the Cauchy problem for the wave equation, i.e. in the case where $k = 2$ and ${\mathcal L} = \Delta$, condition~\eqref{T2.1.1} is more general than the well-known Kato blow-up condition~\cite{Kato}.
In accordance with~\cite[Theorem~2]{PoVe} the Kato blow-up condition is exact in the scale of power-law nonlinearities for weak solutions of the Cauchy problem for the wave equation.

For $f (t) = t^\lambda$, i.e. in the case of power-law nonlinearities, Theorem~\ref{T2.1} proved in our paper implies the result obtained in~\cite[Theorem~1]{EGKPCR} (see Example~\ref{E2.1}). 
In this case, conditions~\eqref{1.4} and~\eqref{1.5} take the simple form $\lambda > 1$ and $\lambda > p$, respectively.

We also note that~\eqref{1.5} coincides with the analogous condition in \cite[Theorem~2.1]{Nonlinearity}, where the blow-up phenomenon was studied for 
%solution of the differential inequality
the stationary inequality
\begin{equation}
	- {\mathcal L} u
	\ge
	f (|u|)
	\quad
	\mbox{in } 
	{\mathbb R}^n.
	\label{1.7}
\end{equation}
This seems natural because the existence of a non-trivial solution of the last inequality implies the existence of a non-trivial solution of problem~\eqref{1.1i}, \eqref{1.1c} that does not depend on $t$. 
However, condition~\eqref{T2.1.1} is stronger than the similar blow-up condition for solutions of inequality~\eqref{1.7}. This effect can be detected even in the case of power-law nonlinearities.
%Indeed, for $p \ge 1$ and $n > m$, it is shown in~\cite[Theorems 4.1 and~29.1]{MPbook} that the blow-up of solutions of inequality~\eqref{1.7} with $f (t) = t^\lambda$ is guaranteed by the condition
For instance, if $f (t) = t^\lambda$ and, at the same time, $p \ge 1$ and $n > m$, then in accordance with~\cite[Theorems 4.1 and~29.1]{MPbook} the blow-up of solutions of inequality~\eqref{1.7} is guaranteed by the condition
\begin{equation}
	p
	<
	\lambda
	\le
	\frac{n}{n - m} p,
	\label{1.8}
\end{equation}
whereas the blow-up of solutions of the problem
\begin{equation}
	u_t
	-
	{\mathcal L} u
	\ge
	|u|^\lambda
	\quad
	\mbox{in } 
	{\mathbb R}_+^{n+1},
	\quad
	u (x, 0) = u_0 (x) \ge 0,
	\label{E2.1.1}
\end{equation}
occurs if
\begin{equation}
	p 
	<
	\lambda
	\le
	p
	+
	\frac{m}{n}.
	\label{1.9}
\end{equation}
In so doing, both conditions~\eqref{1.8} and~\eqref{1.9} are exact. Moreover, if $p = 1$ and $m = 2$, then~\eqref{1.9} coincides with the well-known Fujita--Hayakawa condition.
As mentioned above, the first inequality in~\eqref{1.9} follows from~\eqref{1.5}, while the second inequality is a consequence of~\eqref{T2.1.1}.

Now, let us say a few words about the structure of this paper. Our main results are Theorems~\ref{T2.0}--\ref{T2.3} formulated in Section~\ref{main}. This section also contains some examples and remarks demonstrating the exactness of these results. 

Sections~\ref{preliminaries} and~\ref{proof} are devoted to the proof of Theorems~\ref{T2.0}--\ref{T2.3}. The main idea of the proof is to estimate the function $E$ defined by formula~\eqref{E}.
The final aim is to show that the integrand in the right-hand sides of~\eqref{E} is equal to zero. This is obviously equivalent to the triviality of a solution of~\eqref{1.1i}, \eqref{1.1c}. 

The key statements of Section~\ref{preliminaries} are Lemmas~\ref{L3.4}--\ref{L3.7}. They imply Lemmas~\ref{L3.8} and~\ref{L3.9} that contain estimates of the function $E$.
Lemmas~\ref{L3.1}--\ref{L3.3} are of a technical nature. We need them to prove 
Lemmas~\ref{L3.4}--\ref{L3.7}.

Theorems~\ref{T2.0}--\ref{T2.3} are proved in Section~\ref{proof}.
According to Lemmas~\ref{L3.4} and~\ref{L3.5}, in the case of $n \le m (1 - 1 / k)$, conditions~\eqref{1.4} and~\eqref{1.5} guarantee that $E (r)$ tends to zero as $r \to \infty$. This is obviously enough to prove Theorem~\ref{T2.0}. 
To prove Theorems~\ref{T2.1} and~\ref{T2.2}, we assume by contradiction that~\eqref{1.1i}, \eqref{1.1c} has a non-trivial solution.
%or, in other words, 
%This, in turn, implies that 
This, in turn, allows one to assert that 
$E (r) > 0$ for all sufficiently large $r > 0$. 
Applying 
%Using
%By
%Further, using
Lemmas~\ref{L3.4}, \ref{L3.5}, \ref{L3.8}, and~\ref{L3.9}, 
we obtain estimates of 
$
%	E (r) / r^{n - m (1 - 1 / k)}
	E (r)
$
divided by a suitable power of $r$ in a neighborhood of zero.
%in a neighborhood of zero.
These estimates lead us to a contradiction with conditions of Theorems~\ref{T2.1} and~\ref{T2.2}.
%with inequalities~\eqref{T2.1.1} and~\eqref{T2.2.1}.
Finally, we note that Theorem~\ref{T2.3} is a consequence of Theorems~\ref{T2.0}--\ref{T2.2}.

\section{Main results}\label{main}

\begin{theorem}\label{T2.0}
Let $n \le m (1 - 1 / k)$ and, moreover,
\begin{equation}
	\int_1^\infty
	\frac{
		\zeta^{1 / k - 1}
		d\zeta
	}{
		f^{1 / k} (\zeta)
	}
	<
	\infty
	\label{1.4}
\end{equation}
and
\begin{equation}
	\int_1^\infty
	\frac{
		\zeta^{p / m - 1}
		d\zeta
	}{
		f^{1 / m} (\zeta)
	}
	<
	\infty.
	\label{1.5}
\end{equation}
Then any global weak solution of~\eqref{1.1i}, \eqref{1.1c} is trivial.
\end{theorem}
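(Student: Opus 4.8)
\emph{Strategy.} The proof uses the test-function (nonlinear capacity) method together with the Legendre-transform estimates collected in the preliminary lemmas. Fix the parabolic scaling $\varkappa = m/k$ of the operator (so that $\partial_t^k$ and $\partial^\alpha$, $|\alpha| = m$, transform identically under $x \mapsto \mu x$, $t \mapsto \mu^{m/k} t$, and hence $\varkappa k = m$). For $R > 0$ and a sufficiently large positive integer $\mu$ (depending only on $f, m, k, p, n$) put
$$
	\varphi_R (x, t)
	=
	\phi \!\left( \frac{x}{R} \right)^{\!\mu}
	\eta \!\left( \frac{t}{R^\varkappa} \right)^{\!\mu},
$$
where $\phi \in C_0^\infty (B_2)$, $\eta \in C^\infty ({\mathbb R})$, $0 \le \phi, \eta \le 1$, $\phi \equiv 1$ on $B_1$, and $\eta (s) = 1$ for $s \le 1$, $\eta (s) = 0$ for $s \ge 2$. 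Then $\varphi_R \in C_0^\infty ({\mathbb R}^{n+1})$ is non-negative, $\varphi_R \equiv 1$ on $Q_R^\varkappa$, and — since $\eta \equiv 1$ near $0$ — one has $\partial_t^i \varphi_R (\cdot, 0) = 0$ for $1 \le i \le k - 1$, while $\varphi_R (\cdot, 0) = \phi (\cdot / R)^\mu \ge 0$. Moreover $\partial^\alpha \varphi_R$ ($|\alpha| = m$) and $\partial_t^k \varphi_R$ are supported in the collar $\Gamma_R := Q_{2R}^\varkappa \setminus Q_R^\varkappa$ with $|\partial^\alpha \varphi_R| \le C R^{-m}$ and $|\partial_t^k \varphi_R| \le C R^{-m}$ (using $\varkappa k = m$); the large power $\mu$ is precisely what makes these derivatives vanish, to all needed orders, on the lateral boundary.

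\emph{The master inequality.} Insert $\varphi = \varphi_R$ into \eqref{1.2}. Because $\partial_t^i \varphi_R (\cdot, 0)$ vanishes for $1 \le i \le k - 1$, the boundary sum on the right collapses to the single term $\int_{{\mathbb R}^n} u_{k-1}(x)\, \phi (x/R)^\mu \, dx$, which is non-negative since $u_{k-1} \ge 0$; keep it on the left. On the other side, $|(-1)^k u\, \partial_t^k \varphi_R| = |u|\,|\partial_t^k \varphi_R|$, and by the growth hypothesis on the $a_\alpha$, $\big|\sum_{|\alpha| = m} (-1)^m a_\alpha\, \partial^\alpha \varphi_R\big| \le A \sum_{|\alpha| = m} |u|^p \, |\partial^\alpha \varphi_R|$, with all these terms supported in $\Gamma_R$. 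Now apply Young's inequality in its convex form: since $f$ is convex, $|u|\,|\partial_t^k \varphi_R| \le \varepsilon f(|u|)\varphi_R + \varepsilon \varphi_R f^{*}\!\big(\varepsilon^{-1} |\partial_t^k \varphi_R|/\varphi_R\big)$ with $f^{*}(\tau) = \sup_{s > 0}(s\tau - f(s))$; and since $\zeta \mapsto f(\zeta^{1/p})$ is convex, $|u|^p\,|\partial^\alpha \varphi_R| \le \varepsilon f(|u|)\varphi_R + \varepsilon \varphi_R g^{*}\!\big(\varepsilon^{-1}|\partial^\alpha \varphi_R|/\varphi_R\big)$ with $g^{*}(\tau) = \sup_{\sigma > 0}(\sigma^p \tau - f(\sigma))$. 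Choosing $\varepsilon > 0$ small enough (depending on $A$ and the number of multi-indices of length $m$), absorbing the resulting $f(|u|)\varphi_R$-terms on the left, and using $\varphi_R \equiv 1$ on $Q_R^\varkappa$, the derivative bounds above, and $|\Gamma_R| \le C R^{n + \varkappa}$, we arrive at
$$
	E (R)
	:=
	\int_{B_R} u_{k-1} \, dx
	+
	\frac12
	\int_{Q_R^\varkappa} f (|u|) \, dx \, dt
	\le
	C R^{n + \varkappa}
	\big(
		\Phi_1 (R) + \Phi_2 (R)
	\big) ,
$$
where $E$ is the functional of \eqref{E} (or a minorant of it), and $\Phi_1$, $\Phi_2$ are the $f^{*}$- and $g^{*}$-contributions of the collar. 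The bookkeeping needed to keep $\varphi_R f^{*}(\cdots/\varphi_R)$ and $\varphi_R g^{*}(\cdots/\varphi_R)$ integrable near the lateral boundary (forcing $\mu$ to be large in terms of the growth of $f$) is exactly the content of the technical Lemmas~\ref{L3.1}--\ref{L3.3}, used in Lemmas~\ref{L3.4}--\ref{L3.7}.

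\emph{Passing to the limit.} It remains to show $R^{n + \varkappa}\big(\Phi_1(R) + \Phi_2(R)\big) \to 0$ as $R \to \infty$. After the substitution $\tau \sim R^{-m}$ (and an integration by parts), the smallness of $R^{n + \varkappa}\Phi_1(R)$ is a reformulation of the Dini-type bound \eqref{1.4}, which controls the behaviour near the origin of the Legendre transform of $f$; similarly, the smallness of $R^{n + \varkappa}\Phi_2(R)$ is a reformulation of \eqref{1.5} — indeed, the change of variable $\sigma = \zeta^p$ turns the integrand of \eqref{1.5} into $\sigma^{1/m - 1}/\big(f(\sigma^{1/p})\big)^{1/m}$, i.e.\ the exact analogue of \eqref{1.4} for the convex function $g(\sigma) = f(\sigma^{1/p})$ with $k$ replaced by $m$. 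This is precisely what Lemmas~\ref{L3.4} and~\ref{L3.5} assert. Here the hypothesis $n \le m(1 - 1/k)$ enters decisively: it says $n + \varkappa = n + m/k \le m$, so the volume factor $R^{n + \varkappa}$ is dominated by $R^m$ and the crude collar estimate already suffices, with no iteration of the kind required for Theorems~\ref{T2.1}, \ref{T2.2}. Consequently $E(R) \to 0$; but $E$ is non-decreasing in $R$ (both integrands being non-negative, by $u_{k-1} \ge 0$ and $f \ge 0$), so $E \equiv 0$. Hence $u_{k-1} = 0$ a.e.\ in ${\mathbb R}^n$ and $f(|u|) = 0$ a.e.\ in ${\mathbb R}_+^{n+1}$, and since $f(\zeta) > 0$ for all $\zeta > 0$, this forces $u = 0$ a.e.\ in ${\mathbb R}_+^{n+1}$, which is the assertion of Theorem~\ref{T2.0}.

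\emph{Main obstacle.} The capacity estimate itself is routine; the real work is in Lemmas~\ref{L3.4} and~\ref{L3.5}, i.e.\ in converting the Dini-type integral conditions \eqref{1.4} and \eqref{1.5} into the decay $R^{n+\varkappa}\Phi_j(R) \to 0$. This requires choosing $\mu$ correctly, making genuine use of \emph{both} convexity hypotheses (the convexity of $f$ for the $\partial_t^k$-term, the convexity of $\zeta \mapsto f(\zeta^{1/p})$ for the $|u|^p$-term), and the calculus identity relating $\int^{\infty}\big(\zeta/f(\zeta)\big)^{1/k}\,d\zeta/\zeta$ to the rate at which the Legendre-type transform of $f$ vanishes at the origin. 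Everything else is bookkeeping.
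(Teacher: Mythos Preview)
Your proposal is not the paper's argument, and as written it has a real gap.

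\textbf{What the paper actually does.} With $\varkappa=m/k$, Lemma~\ref{L3.1} is a plain test-function estimate (no absorption, no Legendre transform). Lemmas~\ref{L3.2}--\ref{L3.5} run a dyadic iteration using Jensen's inequality (applied to the convex functions $f$ and $\zeta\mapsto f(\zeta^{1/p})$) to produce
\[
F_\varkappa\!\left(\text{mean of }|u|^p\text{ or }|u|\text{ over }Q_R^\varkappa\right)\ \ge\ CR,
\]
so that mean tends to $0$ as $R\to\infty$. Then Lemma~\ref{L3.6}/\ref{L3.7} with $k\varkappa=m$ reads
\[
(\text{mean over }Q_R^\varkappa\setminus Q_{R/2}^\varkappa)^{1/p}+(\text{mean})\ \ge\ \frac{C}{R^{\,n-m(1-1/k)}}\int_{Q_{R/2}^\varkappa} f(|u|)\,dx\,dt,
\]
and since $n\le m(1-1/k)$ the right-hand prefactor does not decay; letting $R\to\infty$ forces $\int f(|u|)=0$. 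Conditions \eqref{1.4}, \eqref{1.5} enter solely through the definition of $F_\varkappa$, not through any conjugate of $f$. Your descriptions of Lemmas~\ref{L3.1}--\ref{L3.5} (``bookkeeping for $\varphi_R f^*(\cdot/\varphi_R)$'', ``precisely what Lemmas~\ref{L3.4} and~\ref{L3.5} assert'') do not match their statements.

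\textbf{Where your approach breaks.} In the Young/Legendre scheme you need $\varphi_R\,f^*\!\big(\varepsilon^{-1}|\partial_t^k\varphi_R|/\varphi_R\big)$ to be integrable on the collar. With $\varphi_R=\psi^\mu$ one has $|\partial_t^k\varphi_R|/\varphi_R\sim C R^{-m}\psi^{-k}$ near $\partial\{\psi>0\}$, so integrability requires $f^*$ to grow at most polynomially at infinity; then a large but finite $\mu$ suffices. But polynomial growth of $f^*$ at infinity is equivalent to a power lower bound $f(\zeta)\ge c\,\zeta^{1+\delta}$ for large $\zeta$, and \eqref{1.4}, \eqref{1.5} do \emph{not} force this. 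For instance $f(\zeta)=\zeta\,(\log\zeta)^{N}$ (for large $\zeta$, smoothed convexly near $0$) satisfies both \eqref{1.4} and \eqref{1.5} once $N>\max(k,m)$, yet $f^*(\tau)\asymp \exp(c\,\tau^{1/N})$, and then $\psi^\mu f^*\!\big(C\psi^{-k}\big)\to\infty$ as $\psi\to0$ for every finite $\mu$. So ``$\mu$ large'' cannot rescue the boundary term in general. Separately, your claim that the decay $R^{n+\varkappa}\Phi_j(R)\to0$ is a ``reformulation'' of \eqref{1.4}, \eqref{1.5} is not substantiated: on the bulk of the collar (where $\psi\asymp1$) one only needs $\tau^{-1}f^*(\tau)\to0$ as $\tau\to0$, which follows from convexity alone and uses neither \eqref{1.4} nor \eqref{1.5}; it is the boundary layer that is the obstruction, and there \eqref{1.4}, \eqref{1.5} do not help.

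In short: the capacity/Young route would need an additional structural hypothesis on $f$ (essentially a power lower bound at infinity) that the theorem does not assume. The paper's Jensen-based dyadic argument avoids this entirely.
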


\begin{theorem}\label{T2.1}
Let $n > m (1 - 1 / k)$ and $p \in (0, \infty) \cap (1 - m / n, \infty)$. If~\eqref{1.4} and~\eqref{1.5} are valid and
\begin{equation}
	\int_0^1
	\frac{
		f (r)
		dr
	}{
		r^{1 + \gamma}
	}
	=
	\infty,
	\label{T2.1.1}
\end{equation}
where
\begin{equation}
	\gamma
	=
%	1 
%	+ 
%	\frac{m}{n - m (1 - 1 / k)},
	\frac{
		n p + m / k
	}{
		n - (1 - 1 / k) m
	},
	\label{T2.1.2}
\end{equation}
then any global weak solution of~\eqref{1.1i}, \eqref{1.1c} is trivial.
\end{theorem}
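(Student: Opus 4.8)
The plan is to use the method of test functions (nonlinear capacity) in an anisotropic parabolic scaling adapted to $\partial_t^k-\mathcal L$ and to the exponent $p$, with the Dini divergence~\eqref{T2.1.1} playing the part that the inequality $\lambda\le\gamma$ plays for pure power nonlinearities. Arguing by contradiction, assume $u\not\equiv 0$. Then the quantity $E$ of~\eqref{E}, which dominates $\int_{Q_r^\varkappa}f(|u|)\varphi_r$ and contains the non-negative contribution of $u_{k-1}$, satisfies $E(r)>0$ for all sufficiently large $r$. Into~\eqref{1.2} I substitute the family
\[
	\varphi_r(x,t)=\phi\bigl(|x|^2/r^2\bigr)^{s}\,\theta\bigl(t/r^\varkappa\bigr)^{s},
\]
with $\phi,\theta$ fixed smooth cut-offs supported in $[0,1)$, $s$ large, $\varkappa>0$ a free parameter, and $\theta$ normalized so that $\theta(0)=1$ and $\theta^{(i)}(0)=0$ for $1\le i\le k-1$; this annihilates the contributions of $u_1,\dots,u_{k-1}$ and isolates the non-negative boundary term $\int_{{\mathbb R}^n}u_{k-1}\varphi_r(\cdot,0)\,dx$. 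Using $|\partial_t^k\varphi_r|\le Cr^{-k\varkappa}\varphi_r^{1-k/s}$, $|\partial^\alpha\varphi_r|\le Cr^{-m}\varphi_r^{1-m/s}$ for $|\alpha|=m$, and $|a_\alpha(x,t,u)|\le A|u|^p$, one gets
\[
	E(r)\ \le\ C\!\int_{Q_r^\varkappa}|u|\,r^{-k\varkappa}\varphi_r^{1-k/s}\,dx\,dt\ +\ C\!\int_{Q_r^\varkappa}|u|^{p}\,r^{-m}\varphi_r^{1-m/s}\,dx\,dt .
\]

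The next step is the absorption. Both $f(\zeta)$ and $g(\zeta):=f(\zeta^{1/p})$ are convex and non-decreasing, so Young's inequality with their conjugates $f^{*}$, $g^{*}$ lets one bound $|u|\,w$ and $|u|^{p}\,w$ by $\varepsilon f(|u|)\varphi_r$ plus remainders proportional to $\varepsilon\varphi_r f^{*}(w/(\varepsilon\varphi_r))$ and $\varepsilon\varphi_r g^{*}(w/(\varepsilon\varphi_r))$, for any $\varepsilon>0$ and any weight $w\ge 0$. Applying this with $w=Cr^{-k\varkappa}\varphi_r^{1-k/s}$ and $w=Cr^{-m}\varphi_r^{1-m/s}$, choosing $\varepsilon$ a small constant, absorbing the two terms $\varepsilon\int f(|u|)\varphi_r$ into $E(r)$ (legitimate because the \emph{same} weight $\varphi_r$ occurs on both sides), and integrating the remainders — for which the large power of $\varphi_r$ guarantees integrability — one reaches a master estimate of the form
\[
	E(r)\ \le\ C\,\bigl[\,f^{*}\!\bigl(c\,r^{-k\varkappa}\bigr)+g^{*}\!\bigl(c\,r^{-m}\bigr)\,\bigr]\,r^{\,n+\varkappa},\qquad r>0 .
\]
This is the content of Lemmas~\ref{L3.8} and~\ref{L3.9}. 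The role of~\eqref{1.4} and~\eqref{1.5} — which for $f(t)=t^\lambda$ reduce to $\lambda>1$ and $\lambda>p$ — is to make this estimate usable, in particular to allow a positive choice of $\varkappa$ for which both terms decay and for which $f^{*}$, $g^{*}$ are finite; this is what Lemmas~\ref{L3.4} and~\ref{L3.5} encode.

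It remains to close the argument. In the model case $f(t)=t^\lambda$ one balances the two terms by the choice $\varkappa=m(\lambda-1)/(k(\lambda-p))$, which gives $E(r)\le C\,r^{\,n-m((k-1)\lambda+1)/(k(\lambda-p))}$; the exponent is negative precisely when $\lambda<\gamma$ and vanishes when $\lambda=\gamma$, so letting $r\to\infty$ yields $u\equiv 0$ when $\lambda<\gamma$, while for $\lambda=\gamma$ one first deduces $f(|u|)\in L_1({\mathbb R}^{n+1}_+)$ and then improves the remainders to $o(1)$ using that the derivatives of $\varphi_r$ are supported where $\int f(|u|)$ is already small. For a general convex $f$ these two regimes are unified by~\eqref{T2.1.1}: one rewrites the master estimate as a differential (or iterative) inequality for $E(r)$ divided by the appropriate power of $r$, and the divergence of $\int_0^1 f(r)\,r^{-1-\gamma}\,dr$ is exactly what rules out $E(r)$ remaining bounded and positive, contradicting both $E(r)>0$ for large $r$ and $f(|u|)\in L_{1, loc}({\mathbb R}^{n+1}_+)$.

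I expect the main obstacle to be this last reduction carried out for non-power $f$: since $f^{*}$ and $g^{*}$ are no longer explicit, one must show that the combination of the scaling exponents collapses to the single critical rate $\gamma=(np+m/k)/(n-(1-1/k)m)$ and that the boundary case is subsumed by~\eqref{T2.1.1}, which requires the Dini-type lemmas of Section~\ref{preliminaries}. A secondary technicality is the simultaneous bookkeeping of the two heterogeneous costs $r^{-k\varkappa}$ (from $\partial_t^k$) and $r^{-m}$ (from $\mathcal L$), and checking that $n>m(1-1/k)$, which makes the denominator of $\gamma$ positive, and $p>1-m/n$, which forces $\gamma>1$ and hence makes~\eqref{T2.1.1} a genuine restriction, are precisely the hypotheses that make the scheme run.
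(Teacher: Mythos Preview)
Your route is a genuine alternative to the paper's, but it is not the paper's, and your description of Lemmas~\ref{L3.4}--\ref{L3.9} is inaccurate. The paper never introduces the Legendre transforms $f^{*}$, $g^{*}$ and never proves a ``master estimate'' $E(r)\le C[f^{*}(cr^{-k\varkappa})+g^{*}(cr^{-m})]\,r^{n+\varkappa}$. What Lemmas~\ref{L3.8}--\ref{L3.9} actually say is the opposite in spirit: using Lemma~\ref{L3.1} together with Jensen for the convex functions $f$ and $\zeta\mapsto f(\zeta^{1/p})$, they produce \emph{lower} bounds of the type
\[
E(R)-E(R/2)\ \ge\ C\,R^{\,n+\varkappa}\,f\!\left(\sigma\Big(\tfrac{E(R/2)}{R^{\,n+\varkappa-m}}\Big)^{1/p}\right)
\quad\text{or}\quad
E(R)-E(R/2)\ \ge\ C\,R^{\,n+\varkappa}\,f\!\left(\tfrac{\sigma E(R/2)}{R^{\,n-(k-1)\varkappa}}\right),
\]
one of which always holds. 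The parameter $\varkappa=\frac{n(p-1)+m}{1+(k-1)p}$ is then chosen so that both alternatives collapse to a single recursion $E(r_{i+1})-E(r_i)\ge C\,E(r_i)^{\gamma/p}\,h(\sigma E(r_i)^{1/p}/r_i^{\lambda})$ with $h(\zeta)=f(\zeta)/\zeta^\gamma$ and $\lambda=n-(k-1)\varkappa>0$; separately, Lemmas~\ref{L3.4}--\ref{L3.5} (a dyadic doubling argument, not a Young-inequality absorption) force $E(r_i)^{1/p}/r_i^{\lambda}\to 0$. The contradiction with~\eqref{T2.1.1} is then obtained by a discrete summation over a carefully selected subsequence (the indices where $\zeta_j/r_j^\lambda$ is strictly decreasing), which converts the recursion into
\(
\int_{\zeta_{s_1}}^\infty \zeta^{-(\gamma-p+1)}\,d\zeta \ \ge\ C\int_0^{\zeta_{s_1}/r_{s_1}^\lambda}\tilde h(\sigma\zeta)\,\zeta^{-1}\,d\zeta,
\)
the left side finite because $\gamma>p$, the right side infinite by~\eqref{T2.1.1}.

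Your sketch, by contrast, leads to an \emph{upper} bound on (a weighted version of) $E(r)$, not to an iterative inequality for $E$. For pure powers this is enough in the strictly subcritical range, and your balancing computation is correct there; but your final sentence --- ``the divergence of $\int_0^1 f(r)\,r^{-1-\gamma}\,dr$ is exactly what rules out $E(r)$ remaining bounded and positive'' --- is where the real work hides, and as written it is a gap. An inequality $E(r)\le C[f^{*}(cr^{-k\varkappa})+g^{*}(cr^{-m})]\,r^{n+\varkappa}$ carries no differential or iterative information on $E$, so one cannot ``rewrite'' it as such; and at the critical scaling the right-hand side need not tend to $0$ under~\eqref{T2.1.1} alone (the Dini condition is strictly weaker than the pointwise bound $f(r)\gtrsim r^{\gamma}$ that controls $f^{*}$ near $0$). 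You yourself flag this as the main obstacle; the paper's solution is precisely to avoid $f^{*}$ altogether and to work instead with the Jensen-based lower bounds and the dyadic summation described above, which handles the critical case uniformly.
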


\begin{theorem}\label{T2.2}
Let $n > m (1 - 1 / k)$ and $p \in (0,  1 - m / n]$. If~\eqref{1.4} and~\eqref{1.5} are valid and
\begin{equation}
	\lim_{r \to +0}
	\frac{
		f (r)
	}{
		r^\mu
	}
	=
	\infty
	\label{T2.2.1}
\end{equation}
for some real number $\mu < p / (1 - m /n)$,
then any global weak solution of~\eqref{1.1i}, \eqref{1.1c} is trivial.
\end{theorem}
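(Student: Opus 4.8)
The plan is to argue by contradiction, tracking the scaling behaviour of the functional $E$ from~\eqref{E} along the lines of the scheme used for Theorem~\ref{T2.1}. Suppose problem~\eqref{1.1i}, \eqref{1.1c} admits a global weak solution $u$ that is not identically zero. Then $f(|u|)$ is positive on a set of positive measure, and one tests the weak inequality~\eqref{1.2} against non-negative cut-off functions subordinate to the cylinders $Q_r^\varkappa$, with an exponent $\varkappa = \varkappa(n, m, k, p)$ chosen to balance the time-derivative and spatial-derivative contributions against the nonlinear term. Combined with Lemmas~\ref{L3.4} and~\ref{L3.5}, which absorb the contribution of the initial data $u_0, \dots, u_{k-1}$ and control its possible sign changes, this shows that $E(r) > 0$ for all sufficiently large $r$.

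The main step is to propagate this large-scale positivity down towards the origin by means of the two-scale estimates of Lemmas~\ref{L3.8} and~\ref{L3.9}. Here the growth hypotheses~\eqref{1.4} and~\eqref{1.5} on $f$ at infinity serve to linearise the terms $\int_{Q_r^\varkappa} |u|\, |\partial_t^k \varphi|\, dx\, dt$ and $\int_{Q_r^\varkappa} |u|^p\, |\partial^\alpha \varphi|\, dx\, dt$ coming from~\eqref{1.2}; this is the point where the convexity of $f$ and of $f(\zeta^{1/p})$ is used, through Jensen's inequality, and where the conditions at infinity guarantee that the estimates close without any a priori bound on $u$. The expected outcome is an inequality, valid for all small $r$, that compares $E(r)/r^{\sigma}$ with its value at a larger scale while accumulating a factor built from $f$ near the origin, for an explicit power $\sigma = \sigma(n, m, k, p)$. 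The decisive difference from Theorem~\ref{T2.1} is that, in the present range $p \le 1 - m/n$, the $|u|^p$-term is weak enough for the H\"older exponents to balance differently, so that $\sigma$ involves the \emph{elliptic-type} critical exponent $p/(1-m/n) = np/(n-m)$ rather than the Dini quantity $\gamma$ of~\eqref{T2.1.2}.

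To conclude, hypothesis~\eqref{T2.2.1} --- namely that $f(r)/r^\mu \to \infty$ as $r \to +0$ for some $\mu < p/(1-m/n)$ --- makes the $f$-factor accumulated in the preceding inequality diverge as $r \to +0$, forcing $r^{-\sigma} E(r) \to \infty$; on the other hand, Lemmas~\ref{L3.4} and~\ref{L3.5}, again through~\eqref{1.4} and~\eqref{1.5}, supply the matching upper bound $E(r) \le C r^{\sigma}$ at small scales. These two facts are incompatible, which forces the integrand in the right-hand side of~\eqref{E} to vanish, that is, $u \equiv 0$, contrary to the choice of $u$. In this way Theorem~\ref{T2.2} handles the range $p \le 1 - m/n$ complementary to that of Theorem~\ref{T2.1}; note that at $p = 1 - m/n$ both the quantity $\gamma$ of~\eqref{T2.1.2} and the threshold $p/(1-m/n)$ are equal to $1$.

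The main obstacle, as always with this method, will be the exponent bookkeeping inside Lemmas~\ref{L3.8}--\ref{L3.9}: one must choose the test functions and the exponent $\varkappa$ so that the three orders of differentiation --- $\partial_t^k$, the spatial derivatives $\partial^\alpha$ with $|\alpha| = m$, and the power $p$ --- are balanced correctly, carry the power $p$ through H\"older's and Young's inequalities with constants that remain uniform as $r \to +0$, and check that in the borderline case $p = 1 - m/n$ these estimates do not degenerate, so that the threshold $p/(1-m/n)$ emerges with the strict inequality required in~\eqref{T2.2.1}.
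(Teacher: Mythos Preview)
Your outline has the right spirit---contradiction, tracking $E(r)$, exploiting Lemmas~\ref{L3.8}--\ref{L3.9}---but two concrete points are wrong, and one essential idea is missing.

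\textbf{Direction of the scales.} The argument runs as $r\to\infty$, not $r\to 0$. One fixes $r_0$ with $E(r_0)>0$, sets $r_i=2^i r_0$, and iterates the recursive inequality along this sequence. The smallness that puts you ``near the origin'' in the argument of $f$ is not smallness of $r$; it is the fact that the ratio $E(r_i)/r_i^{\,n-(k-1)\varkappa}\to 0$ as $i\to\infty$. This decay is obtained by combining the first inequalities in Lemma~\ref{L3.9} (which bound the annular averages of $|u|$ below by $E(r_i)/r_i^{\,\ldots}$) with Lemma~\ref{L3.5} (which forces those same averages to vanish). Lemmas~\ref{L3.4}--\ref{L3.5} do not ``absorb the initial data''; they give the a~priori decay of averages of $|u|$ (or $|u|^p$) over large cylinders that drives this ratio to zero. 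Your claimed upper bound $E(r)\le C r^\sigma$ for small $r$ is not available and not used.

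\textbf{The missing parameter $q$.} The key device, absent from your sketch, is an auxiliary exponent $q$ with $p\le q<p/(1-m/n)$. One sets
\[
\varkappa=\frac{(p/q-1)n+m}{1+(k-1)p/q},\qquad
\gamma=\frac{np+mq/k}{n-m(1-1/k)},
\]
so that the balance equations $n+\varkappa-\tfrac{\gamma}{q}(n-(k-1)\varkappa)=0$ and $n+\varkappa-\tfrac{\gamma}{p}(n+\varkappa-m)=0$ hold. With this choice, the dichotomy of Lemma~\ref{L3.9} yields, exactly as in the proof of Theorem~\ref{T2.1}, the recursive estimate
\[
\frac{E(r_{i+1})-E(r_i)}{E^{\gamma/q}(r_i)}\ \ge\ C\,h\!\left(\sigma\Bigl(\tfrac{E(r_i)}{r_i^{\,n-(k-1)\varkappa}}\Bigr)^{1/q}\right),\qquad h(\zeta)=f(\zeta)/\zeta^\gamma,
\]
and summing as in~\eqref{newPT2.1.6} gives a finite integral on the left (since $\gamma>q$) against $\int_0^{\varepsilon} f(\sigma\zeta)\,\zeta^{-1-\gamma}\,d\zeta$ on the right. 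Now $\gamma\to p/(1-m/n)$ as $q\uparrow p/(1-m/n)$, so one chooses $q$ so that $\gamma>\mu$, and then~\eqref{T2.2.1} makes the right-hand integral diverge, giving the contradiction. You cannot take $q=p/(1-m/n)$ directly: at that endpoint $\varkappa=0$ and the whole cylinder geometry collapses. This is precisely why the hypothesis needs the \emph{strict} inequality $\mu<p/(1-m/n)$, and why a single ``elliptic exponent $\sigma$'' does not suffice.
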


\begin{theorem}\label{T2.3}
Let ~\eqref{1.4} and~\eqref{1.5} be valid and $f(0) > 0$. Then~\eqref{1.1i}, \eqref{1.1c} has no global weak solutions. 
\end{theorem}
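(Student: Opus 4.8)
The plan is to deduce Theorem~\ref{T2.3} from Theorems~\ref{T2.0}--\ref{T2.2} in two stages. First I would show that, under the standing hypotheses together with \eqref{1.4}, \eqref{1.5} and $f(0) > 0$, every global weak solution of~\eqref{1.1i}, \eqref{1.1c} is necessarily trivial. Then I would check that the identically zero function fails to satisfy the weak formulation~\eqref{1.2} precisely because $f(0) > 0$; combining the two facts yields that no global weak solution can exist.

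For the first stage I would distinguish the three cases already covered by Theorems~\ref{T2.0}--\ref{T2.2} and merely verify that the extra hypotheses appearing there are automatic here. If $n \le m(1 - 1/k)$, Theorem~\ref{T2.0} applies verbatim. If $n > m(1 - 1/k)$ and $p > 1 - m/n$, I would invoke Theorem~\ref{T2.1}: since $f$ is non-decreasing, $f(0) > 0$ forces $f(r) \ge f(0) > 0$ on $(0,1)$, and in this regime the exponent $\gamma$ of~\eqref{T2.1.2} is positive, because both its numerator $n p + m/k$ and its denominator $n - (1 - 1/k) m$ are positive; hence
$$
	\int_0^1 \frac{f(r)\, dr}{r^{1 + \gamma}}
	\ge
	f(0) \int_0^1 \frac{dr}{r^{1 + \gamma}}
	=
	\infty,
$$
which is~\eqref{T2.1.1}. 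If $n > m(1-1/k)$ and $0 < p \le 1 - m/n$ (so in particular $n > m$), I would invoke Theorem~\ref{T2.2}: choosing any $\mu$ with $0 < \mu < p/(1 - m/n)$, which is possible since the upper bound is positive, one gets $\lim_{r \to +0} f(r)/r^\mu \ge f(0)\lim_{r \to +0} r^{-\mu} = \infty$, i.e.~\eqref{T2.2.1}. I would also note that these three cases are exhaustive: once $n > m(1-1/k)$, either $n \le m$, in which case $1 - m/n \le 0 < p$ and the second case applies, or $n > m$, in which case exactly one of $p > 1 - m/n$ and $0 < p \le 1 - m/n$ holds. In every case the cited theorem forces any global weak solution to vanish almost everywhere in ${\mathbb R}_+^{n+1}$.

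For the second stage I would test~\eqref{1.2} against a fixed non-negative, non-trivial $\varphi \in C_0^\infty({\mathbb R}^{n+1})$ whose support lies in ${\mathbb R}^n \times (\delta, \infty)$ for some $\delta > 0$, so that $\partial_t^i \varphi(x, 0) = 0$ for $i = 0, \ldots, k-1$ and the initial-data sum drops out. The bound $|a_\alpha(x,t,\zeta)| \le A |\zeta|^p$ with $p > 0$ gives $a_\alpha(x,t,0) = 0$, so, substituting $u \equiv 0$ and $f(|u|) \equiv f(0)$, inequality~\eqref{1.2} collapses to $0 \ge f(0) \int_{{\mathbb R}_+^{n+1}} \varphi\, dx\, dt > 0$, a contradiction. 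Hence the zero function is not a global weak solution, and together with the first stage this shows that~\eqref{1.1i}, \eqref{1.1c} admits no global weak solution whatsoever.

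I expect no genuine analytic difficulty: all the substantive estimates on the energy-type functional $E$ already reside inside Theorems~\ref{T2.0}--\ref{T2.2}, so nothing new of that kind is needed. The only points deserving care are the bookkeeping ones — confirming that $f(0) > 0$ implies~\eqref{T2.1.1} and~\eqref{T2.2.1} in the respective parameter ranges (in particular that $\gamma > 0$ and $p/(1 - m/n) > 0$ there), that the three-case split is complete, and that localizing the test function away from $t = 0$ legitimately removes the initial-data contribution in the concluding contradiction.
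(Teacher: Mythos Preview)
Your proposal is correct and follows essentially the same two-stage approach as the paper: first appeal to Theorems~\ref{T2.0}--\ref{T2.2} to force any global weak solution to be trivial, then test against a non-negative $\varphi$ supported away from $t=0$ to see that $u\equiv 0$ violates~\eqref{1.2} when $f(0)>0$. Your write-up is in fact more thorough than the paper's, which simply declares it ``obvious'' that the extra hypotheses~\eqref{T2.1.1} and~\eqref{T2.2.1} follow from $f(0)>0$ without spelling out the case split or the positivity of $\gamma$.
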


Proof of Theorems~\ref{T2.0}--\ref{T2.3} is given in Section~\ref{proof}.

\begin{remark}\label{R2.1}
Inequality~\eqref{1.4} coincides with the Kiguradze--Kvinikadze blow-up condition for solutions of the Cauchy problem
\begin{equation}
	w^{(k)} (r) = f (w),
	\quad
	w (0) > 0,
	\:
	w^{(i)} (0) = 0,
	\:
	i = 1, \ldots, k - 1,
	\label{R2.1.1}
\end{equation}
for the ordinary differential equation~\cite[Theorem~1.2]{KK}.
In the case where~\eqref{1.4} is not fulfilled, it can be shown that~\eqref{R2.1.1} has a solution defined on the whole interval $[0, \infty)$.
Therefore, taking $u (x, t) = w (t)$, we obtain a non-trivial global weak solution of~\eqref{1.1i}, \eqref{1.1c} which does not depend on spatial variables. 

Analogously,~\eqref{1.5} coincides with the Kiguradze--Kvinikadze blow-up condition for the Cauchy problem
\begin{equation}
	w^{(m)} (r) = f (w^{1 / p}),
	\quad
	w (0) > 0,
	\:
	w^{(i)} (0) = 0,
	\:
	i = 1, \ldots, m - 1.
	\label{R2.1.2}
\end{equation}
If~\eqref{1.5} is not valid, then~\eqref{R2.1.2} has a solution defined on the whole interval $[0, \infty)$. 
Since $w (r) > w (0)$ for all $r \in (0, \infty)$,
setting $v = w - w (0)$, we obtain a solution of the inequality
$$
	v^{(m)} (r) \ge f (v^{1 / p})
$$
for all $r \in [0, \infty)$ satisfying the conditions
$v^{(i)} (0) = 0$, $i = 0, \ldots, m - 1$.
Thus, the function 
$u (x, t) = v^{1 / p} (|x_1|)$ 
is a non-trivial global weak solution of~\eqref{1.1i}, \eqref{1.1c} with
$$
	{\mathcal L} u
	=
	-
	\frac{
		\partial^m |u|^p 
		(\operatorname{sign} x_1)^m
	}{
		\partial x_1^m
	}.
$$

In the case where $k = p = 1$ and $m = 2$, we obviously have $\gamma = 1 + 2 / n$. In this case,~\eqref{T2.1.1} coincides with the blow-up condition for solutions of the Cauchy problem for the heat equation obtained in~\cite{LS}.
Note that, in paper~\cite{LS}, it is assumed that the initial values of the Cauchy problem belongs to $L_\infty ({\mathbb R}^n)$ and the solutions belong to ${L_\infty ({\mathbb R}^n \times (0, T))}$ for all real number $T > 0$.
We do not need such assumptions.
\end{remark}

For the problem 
\begin{equation}
			u_{tt} - \Delta u \ge f (|u|)
			\quad
			\mbox{in } {\mathbb R}_+^{n+1},
			\quad
			u (x, 0) = u_0 (x),
			\:
			u_t (x, 0) = u_1 (x) \ge 0.
	\label{WE}
\end{equation}
Theorems~\ref{T2.0} and~\ref{T2.1} imply the following statements.

\begin{corollary}\label{C2.2}
Let $n = 1$ and
\begin{equation}
	\int_1^\infty
	(f (\zeta) \zeta)^{-1/2}
	d\zeta
	<
	\infty.
	\label{C2.2.1}
\end{equation}
Then any global weak solution of~\eqref{WE} is trivial.
\end{corollary}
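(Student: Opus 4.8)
The plan is to recognize that Corollary~\ref{C2.2} is the specialization of Theorem~\ref{T2.0} to the wave inequality. First I would cast problem~\eqref{WE} in the general form~\eqref{1.1i}, \eqref{1.1c}: one takes $k = 2$, $m = 2$, and $\mathcal L = \Delta$, which in the notation of the paper corresponds to the Caratheodory coefficient $a_\alpha (x, t, \zeta) = \zeta$ for the multi-index $\alpha$ with a single entry equal to $2$ (and $a_\alpha \equiv 0$ for the remaining multi-indices of length $m = 2$, which for $n = 1$ do not occur). These coefficients satisfy $|a_\alpha (x, t, \zeta)| \le A |\zeta|^p$ with $A = 1$ and $p = 1$, and this forces $p = 1$ since the bound must hold for all $\zeta \in \mathbb R$. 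Hence every global weak solution of~\eqref{WE} is a global weak solution of~\eqref{1.1i}, \eqref{1.1c} for this choice of data.

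Next I would verify that the hypotheses of Theorem~\ref{T2.0} hold. With $n = 1$ and $k = m = 2$ we have $m (1 - 1/k) = 2 \cdot \tfrac{1}{2} = 1 = n$, so the requirement $n \le m (1 - 1/k)$ is satisfied (with equality, which the statement of Theorem~\ref{T2.0} permits). It remains to check conditions~\eqref{1.4} and~\eqref{1.5}. For~\eqref{1.4}, substituting $k = 2$ gives $\zeta^{1/k - 1} = \zeta^{-1/2}$ and $f^{1/k} (\zeta) = f^{1/2} (\zeta)$, so the integrand equals $(f (\zeta) \zeta)^{-1/2}$; for~\eqref{1.5}, substituting $p = 1$ and $m = 2$ gives $\zeta^{p/m - 1} = \zeta^{-1/2}$ and $f^{1/m} (\zeta) = f^{1/2} (\zeta)$, so the integrand is again $(f (\zeta) \zeta)^{-1/2}$. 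Thus both~\eqref{1.4} and~\eqref{1.5} reduce to~\eqref{C2.2.1} and are therefore fulfilled.

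Finally, since $p = 1$ the standing hypothesis on $f$, namely that $f (\zeta)$ and $f (\zeta^{1/p})$ be non-decreasing convex functions on $[0, \infty)$ positive on $(0, \infty)$, reduces to the convexity, monotonicity and positivity of $f$ alone, which is assumed. Theorem~\ref{T2.0} then yields that any global weak solution of~\eqref{1.1i}, \eqref{1.1c} with the above data is trivial, i.e. vanishes almost everywhere in $\mathbb R_+^{n+1}$; in particular any global weak solution of~\eqref{WE} is trivial. There is essentially no obstacle here beyond this bookkeeping: the only point that needs a moment's care is confirming that the boundary case $n = m (1 - 1/k)$ is covered by Theorem~\ref{T2.0} (it is), so that one does not need to invoke the more delicate Theorems~\ref{T2.1}--\ref{T2.2}, whose Dini-type condition~\eqref{T2.1.1} becomes relevant only when $n > m (1 - 1/k)$.
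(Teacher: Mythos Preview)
Your proposal is correct and follows exactly the paper's own argument: it reduces Corollary~\ref{C2.2} to Theorem~\ref{T2.0} by taking $k=m=2$, $p=1$, checking the borderline $n = m(1-1/k)$, and observing that both~\eqref{1.4} and~\eqref{1.5} collapse to~\eqref{C2.2.1}.
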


\begin{corollary}\label{C2.3}
Let $n \ge 2$, condition~\eqref{C2.2.1} be valid, and
\begin{equation}
	\int_0^1
	\frac{
		f(r)
		dr
	}{
		r^{2 + 2 /(n - 1)}
	}
	=
	\infty.
	\label{C2.3.1}
\end{equation}
Then any global weak solution of~\eqref{WE} is trivial.
\end{corollary}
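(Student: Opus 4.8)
\textbf{Proof plan for Corollary~\ref{C2.3}.}

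The plan is to deduce Corollary~\ref{C2.3} from Theorem~\ref{T2.1} by specializing to $k = m = 2$, $p = 1$, $A = 1$, and ${\mathcal L} = \Delta$; the whole argument is a verification that, under this specialization, the hypotheses of Theorem~\ref{T2.1} turn into exactly those of Corollary~\ref{C2.3}.

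First I would rewrite problem~\eqref{WE} in the form~\eqref{1.1i}, \eqref{1.1c}. Since $\Delta u = \sum_{i=1}^n \partial_i^2 u$, we set ${\mathcal L} u = \sum_{|\alpha| = 2} \partial^\alpha a_\alpha(x, t, u)$ with $a_\alpha(x, t, \zeta) = \zeta$ when $\alpha = 2 e_i$ for some $i \in \{1, \ldots, n\}$ and $a_\alpha \equiv 0$ otherwise; then $m = 2$ and $|a_\alpha(x, t, \zeta)| \le |\zeta| = |\zeta|^p$ with $A = 1$, $p = 1$. The initial data in~\eqref{WE} match~\eqref{1.1c} with $k = 2$ and $u_{k-1} = u_1 \ge 0$, and the standing hypotheses on $f$ (non-decreasing, positive on $(0, \infty)$, with $f(\zeta)$ and $f(\zeta^{1/p}) = f(\zeta)$ convex) are assumed throughout the paper, hence in Corollary~\ref{C2.3} as well.

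Next I would check the remaining hypotheses of Theorem~\ref{T2.1}. Here $m(1 - 1/k) = 1$, so $n \ge 2$ gives $n > m(1 - 1/k)$; and $p = 1 > 1 - 2/n = 1 - m/n$ for every $n \ge 2$, so $p \in (0, \infty) \cap (1 - m/n, \infty)$. Condition~\eqref{1.4} with $k = 2$ becomes $\int_1^\infty \zeta^{-1/2} f^{-1/2}(\zeta)\, d\zeta < \infty$, i.e.\ exactly~\eqref{C2.2.1}, and condition~\eqref{1.5} with $p = 1$, $m = 2$ becomes the same integral condition, again~\eqref{C2.2.1}. Finally, by~\eqref{T2.1.2},
\[
	\gamma = \frac{n \cdot 1 + 2/2}{\, n - (1 - 1/2)\cdot 2 \,} = \frac{n + 1}{n - 1},
	\qquad
	1 + \gamma = \frac{2n}{n - 1} = 2 + \frac{2}{n - 1},
\]
so condition~\eqref{T2.1.1} is precisely~\eqref{C2.3.1}.

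With all hypotheses of Theorem~\ref{T2.1} in place, that theorem yields that every global weak solution of~\eqref{WE} is trivial, which is the assertion of Corollary~\ref{C2.3}. There is essentially no obstacle here; the only points demanding care are the identification of the constants ($m = k = 2$, $p = A = 1$) that represent the Laplacian $\Delta$ within the operator ${\mathcal L}$, and the elementary arithmetic showing that both~\eqref{1.4} and~\eqref{1.5} collapse to~\eqref{C2.2.1} and that $1 + \gamma$ equals the exponent $2 + 2/(n-1)$ appearing in~\eqref{C2.3.1}.
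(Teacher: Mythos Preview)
Your proposal is correct and follows exactly the paper's own approach: the paper proves Corollary~\ref{C2.3} by noting that for $p=1$ and $k=m=2$ condition~\eqref{C2.2.1} is equivalent to both~\eqref{1.4} and~\eqref{1.5}, while~\eqref{C2.3.1} is equivalent to~\eqref{T2.1.1}, so the result is a direct consequence of Theorem~\ref{T2.1}. Your verification of the arithmetic (in particular $n>m(1-1/k)=1$, $p=1>1-m/n$, and $1+\gamma=2+2/(n-1)$) is accurate and even more explicit than the paper's.
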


To prove Corollaries~\ref{C2.2} and~\ref{C2.3}, we note that, for $p = 1$ and $k =  m = 2$, condition~\eqref{C2.2.1} is equivalent to~\eqref{1.4} and~\eqref{1.5}.
At the same time,~\eqref{C2.3.1} is equivalent to~\eqref{T2.1.1}.
Thus, Corollary~\ref{C2.2} follows from Theorem~\ref{T2.0}, while Corollary~\ref{C2.3} is a consequence of Theorem~\ref{T2.1}.

\begin{remark}\label{R2.2}
As mentioned above, condition~\eqref{C2.2.1} is essential.
If~\eqref{C2.2.1} is not valid, then problem~\eqref{WE} has a positive global weak solution which depends only on $t$.
For power-law nonlinearities $f (t) = t^\lambda$, Corollary~\ref{C2.3} leads to the Kato blow-up condition
\begin{equation}
	1
	<
	\lambda
	\le
	1 
	+
	\frac{2}{n - 1}
	\label{R2.2.1}
\end{equation}
obtained in paper~\cite{Kato}. 
Indeed, the first inequality in~\eqref{R2.2.1} is equivalent to~\eqref{C2.2.1}, whereas the second inequality is equivalent to~\eqref{C2.3.1}.
It is well-known that both inequalities in~\eqref{R2.2.1} are exact.
To verify the exactness of the first inequality, it is sufficient to check that, in the case of $\lambda \le 1$, the function
$
	u (x, t)
	=
	e^t
$
is a positive global weak solution of the problem
$$
%\begin{equation}
	u_{tt} - \Delta u \ge u^\lambda
	\quad
	\mbox{in } {\mathbb R}_+^{n+1},
	\quad
	u (x, 0) = u_0 (x),
	\:
	u_t (x, 0) = u_1 (x) \ge 0.
%	\label{R2.2.2}
%\end{equation}
$$
In accordance with~\cite[Theorem~2]{PoVe} the second inequality in~\eqref{R2.2.1} is also exact.
\end{remark}

\begin{example}\label{E2.1}
Let us consider problem~\eqref{E2.1.1}.
It is easy to see that condition~\eqref{1.4} is equivalent to $\lambda > 1$, while~\eqref{1.5} is equivalent to $\lambda > p$.
In so doing,~\eqref{T2.1.1} is fulfilled if 
$$
	\lambda \le p + \frac{m}{n}.
$$ 
Thus, in accordance with Theorem~\ref{T2.1} if
\begin{equation}
	\max \{ 1, p \}
	<
	\lambda 
	\le 
	p + \frac{m}{n},
	\label{E2.1.2}
\end{equation}
then any global weak solution of~\eqref{E2.1.1} is trivial. 

Note that condition~\eqref{E2.1.2} was earlier obtained in~\cite[Theorem~1]{EGKPCR}.
\end{example}

\begin{example}\label{E2.2}
We examine the critical exponent $\lambda = p + m / n > 1$ in~\eqref{E2.1.2}.
Namely, consider the problem
\begin{equation}
	u_t
	-
	{\mathcal L} u
	\ge
	u^{p + m / n}
	\log^\mu 
	\left(
		e + \frac{1}{u}
	\right)
	\quad
	\mbox{in } 
	{\mathbb R}_+^{n+1},
	\quad
	u (x, 0) = u_0 (x) \ge 0,
	\label{E2.2.1}
\end{equation}
where $\mu$ is a real number. 
For $u = 0$, we assume by continuity that the right-hand side of inequality~\eqref{E2.2.1} is equal to zero.
By Theorem~\ref{T2.1}, if
$$
	\mu \ge -1,
$$
then any global weak solution of~\eqref{E2.2.1} is trivial. 
\end{example}

\section{Preliminary and auxiliary statements}\label{preliminaries}

In this section, it is assumed that $u$ is a global weak solution of~\eqref{1.1i}, \eqref{1.1c} and, moreover, conditions~\eqref{1.4} and~\eqref{1.5} are valid.
Let us recall that we have agreed to denote by $Q_r^\varkappa$ the cylinder $Q_r^\varkappa = B_r \times (0, r^\varkappa)$, where $\varkappa > 0$ is a real number and $B_r$ is the open ball of radius $r > 0$ centered at zero.
Below, by $C$ and $\sigma$ we mean various positive constants that can depend only on $A$, $k$, $m$, $n$, $p$, and $\varkappa$. 
Put
$$
	F_\varkappa (s)
	=
	\left(
		\int_s^\infty
		\frac{
			\zeta^{1 / k - 1}
			d\zeta
		}{
			f^{1 / k} (\zeta)
		}
	\right)^{1 / \varkappa}
	+
	\int_s^\infty
	\frac{
		\zeta^{p / m - 1}
		d \zeta
	}{
		f^{1 / m} (\zeta)
	}.
%	\quad
%	\varkappa > 0.
$$

\begin{lemma}\label{L3.1}
Let $R > 0$ and $\varkappa > 0$ be real numbers. Then
\begin{align}
	\frac{
		1
	}{
		(r_2^\varkappa - r_1^\varkappa)^k
	}
	\int_{
		r_1^\varkappa
	}^{
		r_2^\varkappa
	}
	\int_{B_{r_2}}
	|u|
	dx
	dt
	&
	{}
	+
	\frac{1}{(r_2 - r_1)^m}
	\int_0^{r_2^\varkappa}
	\int_{B_{r_2} \setminus B_{r_1}}
	|u|^p
	dx
	dt
	\nonumber
	\\
	&
	{}
	\ge
	C
	\int_{Q_{r_1}^\varkappa}
	f (|u|)
	dx
	dt
	\label{L3.1.1}
\end{align}
for all real numbers $R \le r_1 < r_2 \le 2 R$.
\end{lemma}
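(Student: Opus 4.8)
The plan is to substitute into the weak inequality~\eqref{1.2} a product cut‑off function $\varphi(x, t) = \psi(x)\,\theta(t)$ adapted to the two cylinders $Q_{r_1}^\varkappa \subset Q_{r_2}^\varkappa$. I would take $\psi \in C_0^\infty({\mathbb R}^n)$ of the radial form $\psi(x) = \eta(|x|)$ with $0 \le \psi \le 1$, $\psi \equiv 1$ on $\overline{B_{r_1}}$ and $\operatorname{supp}\psi \subset B_{r_2}$, together with $\theta \in C_0^\infty({\mathbb R})$ such that $0 \le \theta \le 1$, $\theta \equiv 1$ on a neighbourhood of $[0, r_1^\varkappa]$ and $\theta \equiv 0$ on $[r_2^\varkappa, \infty)$. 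Then $\varphi$ is a non‑negative function in $C_0^\infty({\mathbb R}^{n+1})$ which is identically $1$ on $Q_{r_1}^\varkappa$. Moreover, for a multi‑index $\alpha$ with $|\alpha| = m$ we have $\partial^\alpha \varphi = \theta\,\partial^\alpha \psi$, supported within ${\mathbb R}_+^{n+1}$ in $(B_{r_2} \setminus B_{r_1}) \times (0, r_2^\varkappa)$, whereas $\partial_t^k \varphi = \psi\,\theta^{(k)}$ is supported in $B_{r_2} \times [r_1^\varkappa, r_2^\varkappa]$.

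The next step is to record the cut‑off estimates $|\theta^{(k)}(t)| \le C (r_2^\varkappa - r_1^\varkappa)^{-k}$ for $t \in [r_1^\varkappa, r_2^\varkappa]$ and $|\partial^\alpha \psi(x)| \le C (r_2 - r_1)^{-m}$ for $|\alpha| = m$. The first is the usual scaling of a one‑variable cut‑off whose transition interval has length $r_2^\varkappa - r_1^\varkappa$. For the second, the chain rule expresses $\partial^\alpha \psi$ as a sum of terms $\eta^{(j)}(|x|)\,h_j(x)$ with $1 \le j \le m$, where $h_j$ is smooth away from the origin and positively homogeneous of degree $j - m$, so $|h_j(x)| \le C |x|^{j - m}$; on the support of $\partial^\alpha \psi$ one has $r_1 \le |x| \le r_2$ and $|\eta^{(j)}| \le C (r_2 - r_1)^{-j}$. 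This is exactly where the assumption $R \le r_1 < r_2 \le 2R$ enters: it gives $r_2 - r_1 \le R \le r_1 \le |x|$ on that set, hence $|\eta^{(j)}(|x|)\,h_j(x)| \le C (r_2 - r_1)^{-j} r_1^{j - m} \le C (r_2 - r_1)^{-m}$, uniformly in $R$.

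Now I would insert $\varphi$ into~\eqref{1.2}. Since $\theta \equiv 1$ near $t = 0$, we have $\partial_t^i \varphi(x, 0) = 0$ for $i = 1, \dots, k - 1$ and $\varphi(x, 0) = \psi(x)$, so the sum over the initial data on the right‑hand side of~\eqref{1.2} collapses to $\int_{{\mathbb R}^n} u_{k-1}(x)\,\psi(x)\,dx$, which is non‑negative by~\eqref{1.1c} and may therefore be dropped. Moving the ${\mathcal L}$‑term to the right, estimating $(-1)^k u\,\partial_t^k \varphi \le |u|\,|\partial_t^k \varphi|$ and $|a_\alpha(x, t, u)\,\partial^\alpha \varphi| \le A |u|^p\,|\partial^\alpha \varphi|$, bounding the left‑hand side from below by $\int_{Q_{r_1}^\varkappa} f(|u|)\,dx\,dt$ (using $\varphi \equiv 1$ on $Q_{r_1}^\varkappa$), and inserting the support and cut‑off information above, one arrives at
\begin{align*}
	\int_{Q_{r_1}^\varkappa} f(|u|)\,dx\,dt
	& \le
	\frac{C}{(r_2^\varkappa - r_1^\varkappa)^k}
	\int_{r_1^\varkappa}^{r_2^\varkappa} \int_{B_{r_2}} |u|\,dx\,dt \\
	& \quad {} +
	\frac{C}{(r_2 - r_1)^m}
	\int_0^{r_2^\varkappa} \int_{B_{r_2} \setminus B_{r_1}} |u|^p\,dx\,dt ,
\end{align*}
which, after dividing by $C$ and renaming the constant, is exactly~\eqref{L3.1.1}. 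All integrals are finite because $u \in L_{1, loc}({\mathbb R}_+^{n+1}) \cap L_{p, loc}({\mathbb R}_+^{n+1})$ and $f(|u|) \in L_{1, loc}({\mathbb R}_+^{n+1})$.

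I do not expect a genuine obstacle: this is a rescaled test‑function estimate of nonlinear‑capacity type. The two points that need care are the uniform bound $|\partial^\alpha \psi| \le C (r_2 - r_1)^{-m}$ for the radial cut‑off — which is precisely what forces the two‑sided restriction $R \le r_1 < r_2 \le 2R$ in the statement — and the arrangement of the boundary term (via $\theta \equiv 1$ near $t = 0$) so that only the sign‑definite contribution $\int_{{\mathbb R}^n} u_{k-1}\psi \ge 0$ survives and can be discarded without any sign or growth assumption on $u_0, \dots, u_{k-2}$.
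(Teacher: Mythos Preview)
Your proof is correct and follows essentially the same route as the paper's: the paper also takes a product test function $\varphi(x,t)=\varphi_0\!\left(\frac{r_2-|x|}{r_2-r_1}\right)\varphi_0\!\left(\frac{r_2^\varkappa-t}{r_2^\varkappa-r_1^\varkappa}\right)$ built from a single smooth step $\varphi_0$, inserts it into~\eqref{1.2}, and reads off the three estimates. Your version is simply more explicit about the chain-rule bound $|\partial^\alpha\psi|\le C(r_2-r_1)^{-m}$ (where the hypothesis $R\le r_1<r_2\le 2R$ is used) and about why the initial-data contribution collapses to the non-negative term $\int u_{k-1}\psi\,dx$; the paper leaves both points implicit.
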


\begin{proof}
Take a non-decreasing function $\varphi_0 \in C^\infty ({\mathbb R})$ such that
$$
	\left.
		\varphi_0
	\right|_{
		(- \infty, 0]
	}
	=
	0
	\quad
	\mbox{and}
	\quad
	\left.
		\varphi_0
	\right|_{
		[1, \infty)
	}
	=
	1.
$$
We put
\begin{equation}
	\varphi (x, t)
	=
	\varphi_0 
	\left(
		\frac{r_2 - |x|}{r_2 - r_1}
	\right)
	\varphi_0 
	\left(
		\frac{
			r_2^\varkappa - t
		}{
			r_2^\varkappa - r_1^\varkappa
		}
	\right).
    \label{PL3.1.1}
\end{equation}
It is easy to see that
$$
	\left|
		\int_{
			{\mathbb R}_+^{n+1}
		}
		u
		\partial_t^k \varphi
		dx
		dt
	\right|
	\le
	\frac{
		C
	}{
		(r_2^\varkappa - r_1^\varkappa)^k
	}
	\int_{
		r_1^\varkappa
	}^{
		r_2^\varkappa
	}
	\int_{B_{r_2}}
	|u|
	dx
	dt,
$$
$$
	\left|
		\int_{
			{\mathbb R}_+^{n+1}
		}
		\sum_{|\alpha| = m}
		(- 1)^m
		a_\alpha (x, t, u)
		\partial^\alpha
		\varphi
		dx
		dt
	\right|
	\le
	\frac{C}{(r_2 - r_1)^m}
	\int_0^{r_2^\varkappa}
	\int_{B_{r_2} \setminus B_{r_1}}
	|u|^p
	dx
	dt,
$$
and
$$
	\int_{
		{\mathbb R}_+^{n+1}
	}
	f (|u|)
	\varphi
	dx
	dt
	\ge
	\int_0^{r_1^\varkappa}
	\int_{B_{r_1}}
	f (|u|)
	dx
	dt
	=
	\int_{Q_{r_1}^\varkappa}
	f (|u|)
	dx
	dt.
$$
Thus, taking~\eqref{PL3.1.1} as a test function in~\eqref{1.2}, we readily obtain~\eqref{L3.1.1}.
\end{proof}

\begin{lemma}\label{L3.2}
Let $p \ge 1$, then for all real numbers $R > 0$, $\varkappa > 0$, and $R \le r_1 < r_2 \le 2 R$ at least one of the following two inequalities is valid:
\begin{equation}
	I (r_2) - I (r_1)
	\ge
	C
	(r_2 - r_1)^{k p}
	R^{
		k p (\varkappa - 1)
	}
	f^p (I^{1 / p} (r_1)),
	\label{L3.2.1}
\end{equation}
\begin{equation}
	I (r_2) - I (r_1)
	\ge
	C
	(r_2 - r_1)^m
	f (I^{1 / p} (r_1)),
	\label{L3.2.2}
\end{equation}
where
\begin{equation}
	I (r)
	=
	\frac{
		1
	}{
		\operatorname{mes} Q_{2 R}^\varkappa
	}
	\int_{Q_r^\varkappa}
	|u|^p
	dx
	dt,
	\quad
	R < r < 2 R.
	\label{L3.2.3}
\end{equation}
\end{lemma}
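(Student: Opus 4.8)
The plan is to apply Lemma~\ref{L3.1} with the given $R$, $\varkappa$, $r_1$, $r_2$, and to notice that the two summands on the left-hand side of~\eqref{L3.1.1} are precisely the sources of the two alternatives~\eqref{L3.2.1} and~\eqref{L3.2.2}. Concretely, I would first bound the right-hand side of~\eqref{L3.1.1} from below by $C R^{n+\varkappa} f(I^{1/p}(r_1))$ once and for all, and then split the left-hand side according to which of its two terms is at least half of that right-hand side.

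For the right-hand side I would use convexity. Since $\zeta \mapsto f(\zeta^{1/p})$ is convex and non-decreasing, Jensen's inequality applied with the normalized Lebesgue measure on $Q_{r_1}^\varkappa$ gives
$$
	\int_{Q_{r_1}^\varkappa} f(|u|)\, dx\, dt
	\ge
	\operatorname{mes} Q_{r_1}^\varkappa \cdot
	f\!\left(
		\left(
			\frac{1}{\operatorname{mes} Q_{r_1}^\varkappa}
			\int_{Q_{r_1}^\varkappa} |u|^p\, dx\, dt
		\right)^{\!1/p}
	\right).
$$
Because $R \le r_1 \le 2R$, one has $\operatorname{mes} Q_{2R}^\varkappa \ge \operatorname{mes} Q_{r_1}^\varkappa \ge C R^{n+\varkappa}$, so the average inside $f$ is at least $I(r_1)$ and, by the monotonicity of $f$, the right-hand side of~\eqref{L3.1.1} is bounded below by $C R^{n+\varkappa} f(I^{1/p}(r_1))$.

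Now at least one of the two terms on the left of~\eqref{L3.1.1} is $\ge \tfrac12$ of that right-hand side. If it is the second term, then, using $(B_{r_2}\setminus B_{r_1})\times(0,r_2^\varkappa) \subset Q_{r_2}^\varkappa \setminus Q_{r_1}^\varkappa$ and hence $\int_0^{r_2^\varkappa}\!\int_{B_{r_2}\setminus B_{r_1}} |u|^p\, dx\, dt \le (\operatorname{mes} Q_{2R}^\varkappa)(I(r_2)-I(r_1))$, together with $\operatorname{mes} Q_{2R}^\varkappa = C R^{n+\varkappa}$, one obtains~\eqref{L3.2.2} at once. If it is the first term, then I would apply Hölder's inequality (this is the only place where $p \ge 1$ is used) in the form
$$
	\int_{r_1^\varkappa}^{r_2^\varkappa}\!\int_{B_{r_2}} |u|\, dx\, dt
	\le
	\bigl((\operatorname{mes} Q_{2R}^\varkappa)(I(r_2)-I(r_1))\bigr)^{1/p}
	(\operatorname{mes} Q_{2R}^\varkappa)^{1-1/p},
$$
where $\int_{r_1^\varkappa}^{r_2^\varkappa}\!\int_{B_{r_2}}|u|^p$ and $\operatorname{mes}(B_{r_2}\times(r_1^\varkappa,r_2^\varkappa))$ have been bounded by $(\operatorname{mes} Q_{2R}^\varkappa)(I(r_2)-I(r_1))$ and $\operatorname{mes} Q_{2R}^\varkappa$ respectively; combining this with the lower bound on the first term of~\eqref{L3.1.1}, with the elementary estimate $r_2^\varkappa - r_1^\varkappa \ge C R^{\varkappa-1}(r_2-r_1)$ (valid since $R \le r_1 < r_2 \le 2R$), and raising to the power $p$ produces~\eqref{L3.2.1} after collecting the powers of $R$.

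The argument is largely mechanical; the only step deserving care is the exponent bookkeeping in the second case, i.e. checking that the powers of $R$ issuing from Hölder's inequality combine with the factor $(r_2-r_1)^{kp}$ into exactly the form of~\eqref{L3.2.1} — here the crude bound $\operatorname{mes}(B_{r_2}\times(r_1^\varkappa,r_2^\varkappa)) \le \operatorname{mes} Q_{2R}^\varkappa$ makes things transparent, since a sharper bound would only leave an extra factor $((r_2-r_1)/R)^{p-1} \le 1$ that strengthens the estimate. No case analysis of whether $I(r_1)$ vanishes is needed: if $I(r_1)=0$ and $f(0)=0$ both~\eqref{L3.2.1} and~\eqref{L3.2.2} are trivially true, and if $f(0)>0$ the bound above still has a strictly positive right-hand side and the same reasoning applies.
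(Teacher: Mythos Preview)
Your proposal is correct and follows essentially the same approach as the paper: both proofs apply Lemma~\ref{L3.1}, split according to which of the two terms on the left of~\eqref{L3.1.1} dominates, estimate the $|u|$-integral by H\"older (using $p\ge 1$) and the $f(|u|)$-integral by Jensen for the convex function $\zeta\mapsto f(\zeta^{1/p})$, and invoke $r_2^\varkappa-r_1^\varkappa\ge C R^{\varkappa-1}(r_2-r_1)$. The only cosmetic differences are that you perform the Jensen step once before the case split (the paper does it inside each case) and that you use the crude bound $\operatorname{mes}(B_{r_2}\times(r_1^\varkappa,r_2^\varkappa))\le\operatorname{mes} Q_{2R}^\varkappa$ in H\"older, whereas the paper keeps the sharp measure and then discards the resulting factor $\bigl(\operatorname{mes} Q_{2R}^\varkappa/\operatorname{mes}(Q_{r_2}^\varkappa\setminus Q_{r_1}^\varkappa)\bigr)^{p-1}\ge 1$; the net effect is identical.
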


\begin{proof}
At first, assume that
\begin{equation}
	\frac{
		1
	}{
		(r_2^\varkappa - r_1^\varkappa)^k
	}
	\int_{
		r_1^\varkappa
	}^{
		r_2^\varkappa
	}
	\int_{B_{r_2}}
	|u|
	dx
	dt
	\ge
	\frac{1}{(r_2 - r_1)^m}
	\int_0^{r_2^\varkappa}
	\int_{B_{r_2} \setminus B_{r_1}}
	|u|^p
	dx
	dt.
	\label{PL3.2.1}
\end{equation}
Then Lemma~\ref{L3.1} implies the estimate
$$
	\frac{
		1
	}{
		(r_2^\varkappa - r_1^\varkappa)^k
	}
	\int_{
		r_1^\varkappa
	}^{
		r_2^\varkappa
	}
	\int_{B_{r_2}}
	|u|
	dx
	dt
	\ge
	C
	\int_{Q_{r_1}^\varkappa}
	f (|u|)
	dx
	dt.
$$
Since
$$
	\int_{
		Q_{r_2}^\varkappa \setminus Q_{r_1}^\varkappa
	}
	|u|
	dx
	dt
	\ge
	\int_{
		r_1^\varkappa
	}^{
		r_2^\varkappa
	}
	\int_{B_{r_2}}
	|u|
	dx
	dt,
$$
this yields
\begin{equation}
	\int_{
		Q_{r_2}^\varkappa \setminus Q_{r_1}^\varkappa
	}
	|u|
	dx
	dt
	\ge
	C
	(r_2^\varkappa - r_1^\varkappa)^k
	\int_{Q_{r_1}^\varkappa}
	f (|u|)
	dx
	dt.
	\label{PL3.2.5}
\end{equation}
Evaluating the left-hand side of the last expression by H\"older's inequality
$$
	(\operatorname{mes} Q_{r_2}^\varkappa \setminus Q_{r_1}^\varkappa)^{(p - 1) / p}
	\left(
		\int_{
			Q_{r_2}^\varkappa \setminus Q_{r_1}^\varkappa
		}
		|u|^p
		dx
		dt
	\right)^{1 / p}
	\ge
	\int_{
		Q_{r_2}^\varkappa \setminus Q_{r_1}^\varkappa
	}
	|u|
	dx
	dt,
$$
we obtain
$$
	\left(
		\int_{
			Q_{r_2}^\varkappa \setminus Q_{r_1}^\varkappa
		}
		|u|^p
		dx
		dt
	\right)^{1 / p}
	\ge
	\frac{
		C (r_2^\varkappa - r_1^\varkappa)^k
	}{
		(\operatorname{mes} Q_{r_2}^\varkappa \setminus Q_{r_1}^\varkappa)^{(p - 1) / p}
	}
	\int_{Q_{r_1}^\varkappa}
	f (|u|)
	dx
	dt,
$$
whence it follows that
\begin{align}
	\frac{
		1
	}{
		\operatorname{mes} Q_{2 R}^\varkappa
	}
	\int_{
		Q_{r_2}^\varkappa \setminus Q_{r_1}^\varkappa
	}
	|u|^p
	dx
	dt
	\ge
	{}
	&
	C (r_2^\varkappa - r_1^\varkappa)^{k p}
	\left(
		\frac{
			\operatorname{mes} Q_{2 R}^\varkappa
		}{
			\operatorname{mes} Q_{r_2}^\varkappa \setminus Q_{r_1}^\varkappa
		}
	\right)^{p - 1}
	\nonumber
	\\
	&
	{}
	\times
	\left(
		\frac{1}{\operatorname{mes} Q_{2 R}^\varkappa}
		\int_{Q_{r_1}^\varkappa}
		f (|u|)
		dx
		dt
	\right)^p.
	\label{PL3.2.2}
\end{align}
Since $f (\zeta^{1/p})$ is a non-decreasing convex function, we have
\begin{align}
	\frac{1}{\operatorname{mes} Q_{2 R}^\varkappa}
	\int_{Q_{r_1}^\varkappa}
	f (|u|)
	dx
	dt
	&
	\ge
	\frac{
		1
	}{
		2^{n + \varkappa}
		\operatorname{mes} Q_{r_1}^\varkappa
	}
	\int_{Q_{r_1}^\varkappa}
	f (|u|)
	dx
	dt
	\nonumber
	\\
	&
	\ge
	\frac{
		1
	}{
		2^{n + \varkappa}
	}
	f 
	\left(
		\left(
			\frac{
				1
			}{
				\operatorname{mes} Q_{r_1}^\varkappa
			}
			\int_{Q_{r_1}^\varkappa}
			|u|^p
			dx
			dt
		\right)^{1 / p}
	\right)
	\nonumber
	\\
	&
	\ge
	\frac{
		1
	}{
		2^{n + \varkappa}
	}
	f 
	\left(
		\left(
			\frac{
				1
			}{
				\operatorname{mes} Q_{2 R}^\varkappa
			}
			\int_{Q_{r_1}^\varkappa}
			|u|^p
			dx
			dt
		\right)^{1 / p}
	\right).
	\label{PL3.2.3}
\end{align}
In view of~\eqref{PL3.2.2}, this implies the estimate
\begin{align*}
	\frac{
		1
	}{
		\operatorname{mes} Q_{2 R}^\varkappa
	}
	\int_{
		Q_{r_2}^\varkappa \setminus Q_{r_1}^\varkappa
	}
	|u|^p
	dx
	dt
	\ge
	{}
	&
	C (r_2^\varkappa - r_1^\varkappa)^{k p}
	\left(
		\frac{
			\operatorname{mes} Q_{2 R}^\varkappa
		}{
			\operatorname{mes} Q_{r_2}^\varkappa \setminus Q_{r_1}^\varkappa
		}
	\right)^{p - 1}
	\\
	&
	{}
	\times
	f^p 
	\left(
		\left(
			\frac{
				1
			}{
				\operatorname{mes} Q_{2 R}^\varkappa
			}
			\int_{Q_{r_1}^\varkappa}
			|u|^p
			dx
			dt
		\right)^{1 / p}
	\right),
\end{align*}
combining which with the evident inequalities
$$
	\left(
		\frac{
			\operatorname{mes} Q_{2 R}^\varkappa
		}{
			\operatorname{mes} Q_{r_2}^\varkappa \setminus Q_{r_1}^\varkappa
		}
	\right)^{p - 1}
	\ge
	1
$$
and
\begin{equation}
	r_2^\varkappa - r_1^\varkappa
	\ge 
	C
	(r_2 - r_1) 
	R^{\varkappa - 1},
	\label{PL3.2.4}
\end{equation}
we arrive at \eqref{L3.2.1}.

Now, let~\eqref{PL3.2.1} not hold. In this case, Lemma~\ref{L3.1} implies that
\begin{equation}
	\frac{1}{(r_2 - r_1)^m}
	\int_0^{r_2^\varkappa}
	\int_{B_{r_2} \setminus B_{r_1}}
	|u|^p
	dx
	dt
	\ge
	C
	\int_{Q_{r_1}^\varkappa}
	f (|u|)
	dx
	dt,
	\label{PL3.2.6}
\end{equation}
whence in accordance with~\eqref{PL3.2.3} and the inequality
\begin{equation}
	\int_{Q_{r_2}^\varkappa \setminus Q_{r_1}^\varkappa}
	|u|^p
	dx
	dt
	\ge
	\int_0^{r_2^\varkappa}
	\int_{B_{r_2} \setminus B_{r_1}}
	|u|^p
	dx
	dt
	\label{PL3.2.7}
\end{equation}
we obtain
$$
	\frac{
		1
	}{
		\operatorname{mes} Q_{2 R}^\varkappa
	}
	\int_{Q_{r_2}^\varkappa \setminus Q_{r_1}^\varkappa}
	|u|^p
	dx
	dt
	\ge
	C
	(r_2 - r_1)^m
	f 
	\left(
		\left(
			\frac{
				1
			}{
				\operatorname{mes} Q_{2 R}^\varkappa
			}
			\int_{Q_{r_1}^\varkappa}
			|u|^p
			dx
			dt
		\right)^{1 / p}
	\right).
$$
To complete the proof, it remains to note that the last expression is equivalent to~\eqref{L3.2.2}.
\end{proof}

\begin{lemma}\label{L3.3}
Let $0 < p < 1$, then for all real numbers $R > 0$, $\varkappa > 0$, and $R \le r_1 < r_2 \le 2 R$ at least one of the following two inequalities is valid:
\begin{equation}
	J (r_2) - J (r_1)
	\ge
	C
	(r_2 - r_1)^k
	R^{k (\varkappa - 1)}
	f (J (r_1)),
	\label{L3.3.1}
\end{equation}
\begin{equation}
	J (r_2) - J (r_1)
	\ge
	C
	(r_2 - r_1)^{m / p}
	f^{1 / p} (J (r_1)),
	\label{L3.3.2}
\end{equation}
where
\begin{equation}
	J (r)
	=
	\frac{
		1
	}{
		\operatorname{mes} Q_{2 R}^\varkappa
	}
	\int_{Q_r^\varkappa}
	|u|
	dx
	dt,
	\quad
	R < r < 2 R.
	\label{L3.3.3}
\end{equation}
\end{lemma}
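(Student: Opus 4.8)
The plan is to mimic the proof of Lemma~\ref{L3.2} almost verbatim, the only genuine change being that for $0<p<1$ Hölder's inequality must be applied in the reverse direction. As there, I would start from Lemma~\ref{L3.1}: at least one of the two terms on the left of~\eqref{L3.1.1} is bounded below by $\tfrac12 C\int_{Q_{r_1}^\varkappa}f(|u|)\,dx\,dt$, and I would treat the two alternatives separately.

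In the first alternative (the term with the $t$-derivative dominates) the argument is identical to the case $p\ge1$, and in fact simpler, since no Hölder step is needed: one uses $\int_{Q_{r_2}^\varkappa\setminus Q_{r_1}^\varkappa}|u|\ge\int_{r_1^\varkappa}^{r_2^\varkappa}\int_{B_{r_2}}|u|$, divides by $\operatorname{mes}Q_{2R}^\varkappa$, and then applies Jensen's inequality to the convex non-decreasing function $f$ together with $\operatorname{mes}Q_{2R}^\varkappa\le 2^{n+\varkappa}\operatorname{mes}Q_{r_1}^\varkappa$ (as in~\eqref{PL3.2.3}, but now for $f$ itself rather than for $f(\zeta^{1/p})$, since the iterated quantity $J$ is the $L^1$-average of $|u|$). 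Combining the resulting bound $(\operatorname{mes}Q_{2R}^\varkappa)^{-1}\int_{Q_{r_1}^\varkappa}f(|u|)\,dx\,dt\ge 2^{-n-\varkappa}f(J(r_1))$ with~\eqref{PL3.2.4} raised to the power $k$ produces~\eqref{L3.3.1}.

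In the second alternative (the spatial term dominates) one has $\tfrac{1}{(r_2-r_1)^m}\int_0^{r_2^\varkappa}\int_{B_{r_2}\setminus B_{r_1}}|u|^p\,dx\,dt\ge C\int_{Q_{r_1}^\varkappa}f(|u|)\,dx\,dt$, and the conceptual pivot is the reverse Hölder inequality $\int_\Omega|u|^p\,dx\,dt\le\bigl(\int_\Omega|u|\,dx\,dt\bigr)^p(\operatorname{mes}\Omega)^{1-p}$ on $\Omega=(B_{r_2}\setminus B_{r_1})\times(0,r_2^\varkappa)\subset Q_{r_2}^\varkappa\setminus Q_{r_1}^\varkappa$, with $\operatorname{mes}\Omega\le\operatorname{mes}Q_{2R}^\varkappa$. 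Solving for $\int_\Omega|u|$, taking the $1/p$-th power, using $\int_{Q_{r_2}^\varkappa\setminus Q_{r_1}^\varkappa}|u|\ge\int_\Omega|u|$, and dividing by $\operatorname{mes}Q_{2R}^\varkappa$, the powers of the measure collapse, since $(\operatorname{mes}Q_{2R}^\varkappa)^{(1-p)/p}\cdot\operatorname{mes}Q_{2R}^\varkappa=(\operatorname{mes}Q_{2R}^\varkappa)^{1/p}$, and one is left with $J(r_2)-J(r_1)\ge C(r_2-r_1)^{m/p}\bigl(\tfrac{1}{\operatorname{mes}Q_{2R}^\varkappa}\int_{Q_{r_1}^\varkappa}f(|u|)\,dx\,dt\bigr)^{1/p}$. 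The same Jensen estimate as in the first alternative then gives~\eqref{L3.3.2}.

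The only place demanding attention is the exponent bookkeeping in the second alternative — tracking how $(\operatorname{mes}Q_{2R}^\varkappa)^{1-p}$ interacts with the $1/p$-th power and with the final normalisation — but this is routine, and no ellipticity or growth hypotheses enter anywhere.
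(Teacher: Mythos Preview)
Your proposal is correct and follows essentially the same route as the paper's proof: split into two alternatives via Lemma~\ref{L3.1}, use Jensen for the convex non-decreasing function $f$ (rather than $f(\zeta^{1/p})$) to pass to $f(J(r_1))$, and in the spatial alternative apply H\"older with exponents $1/p$ and $1/(1-p)$ to control $\int|u|^p$ by $(\int|u|)^p$. The only cosmetic difference is that the paper first passes from $\Omega$ to $Q_{r_2}^\varkappa\setminus Q_{r_1}^\varkappa$ via~\eqref{PL3.2.7} and then applies H\"older there (picking up a harmless factor $\bigl(\operatorname{mes}Q_{2R}^\varkappa/\operatorname{mes}(Q_{r_2}^\varkappa\setminus Q_{r_1}^\varkappa)\bigr)^{1/p-1}\ge1$), whereas you apply H\"older on $\Omega$ first and then enlarge; both orderings give the same bound.
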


\begin{proof}
At first, assume that~\eqref{PL3.2.1} is valid.
As in the proof of Lemma~\ref{L3.2}, we can obviously assert that~\eqref{PL3.2.5} holds.
At the same time, since $f$ is a non-decreasing convex function, we have
\begin{align}
	\frac{1}{\operatorname{mes} Q_{2 R}^\varkappa}
	\int_{Q_{r_1}^\varkappa}
	f (|u|)
	dx
	dt
	&
	\ge
	\frac{
		1
	}{
		2^{n + \varkappa}
		\operatorname{mes} Q_{r_1}^\varkappa
	}
	\int_{Q_{r_1}^\varkappa}
	f (|u|)
	dx
	dt
	\nonumber
	\\
	&
	\ge
	\frac{
		1
	}{
		2^{n + \varkappa}
	}
	f 
	\left(
		\frac{
			1
		}{
			\operatorname{mes} Q_{r_1}^\varkappa
		}
		\int_{Q_{r_1}^\varkappa}
		|u|
		dx
		dt
	\right)
	\nonumber
	\\
	&
	\ge
	\frac{
		1
	}{
		2^{n + \varkappa}
	}
	f 
	\left(
		\frac{
			1
		}{
			\operatorname{mes} Q_{2 R}^\varkappa
		}
		\int_{Q_{r_1}^\varkappa}
		|u|
		dx
		dt
	\right).
	\label{PL3.3.1}
\end{align}
Thus,~\eqref{PL3.2.5} implies the estimate
$$
	\frac{1}{\operatorname{mes} Q_{2 R}^\varkappa}
	\int_{
		Q_{r_2}^\varkappa \setminus Q_{r_1}^\varkappa
	}
	|u|
	dx
	dt
	\ge
	C
	(r_2^\varkappa - r_1^\varkappa)^k
	f 
	\left(
		\frac{
			1
		}{
			\operatorname{mes} Q_{2 R}^\varkappa
		}
		\int_{Q_{r_1}^\varkappa}
		|u|
		dx
		dt
	\right),
$$
whence in accordance with~\eqref{PL3.2.4} inequality~\eqref{L3.3.1} follows at once.

In its turn, if~\eqref{PL3.2.1} is not valid, then Lemma~\ref{L3.1} implies~\eqref{PL3.2.6}, whence in accordance with~\eqref{PL3.2.7} and~\eqref{PL3.3.1} we obtain
\begin{equation}
	\frac{1}{\operatorname{mes} Q_{2 R}^\varkappa}
	\int_{Q_{r_2}^\varkappa \setminus Q_{r_1}^\varkappa}
	|u|^p
	dx
	dt
	\ge
	C
	(r_2 - r_1)^m
	f 
	\left(
		\frac{
			1
		}{
			\operatorname{mes} Q_{2 R}^\varkappa
		}
		\int_{Q_{r_1}^\varkappa}
		|u|
		dx
		dt
	\right).
	\label{PL3.3.2}
\end{equation}
By H\"older's inequality,
$$
	\int_{
		Q_{r_2}^\varkappa \setminus Q_{r_1}^\varkappa
	}
	|u|^p
	dx
	dt
	\le
	(\operatorname{mes} Q_{r_2}^\varkappa \setminus Q_{r_1}^\varkappa)^{1 - p}
	\left(
		\int_{
			Q_{r_2}^\varkappa \setminus Q_{r_1}^\varkappa
		}
		|u|
		dx
		dt
	\right)^p;
$$
therefore,~\eqref{PL3.3.2} yields
\begin{align*}
	\frac{1}{\operatorname{mes} Q_{2 R}^\varkappa}
	\int_{Q_{r_2}^\varkappa \setminus Q_{r_1}^\varkappa}
	|u|
	dx
	dt
	\ge
	{}
	&
	C
	(r_2 - r_1)^{m / p}
	\left(
		\frac{
			\operatorname{mes} Q_{2 R}^\varkappa
		}{
			\operatorname{mes} Q_{r_2}^\varkappa \setminus Q_{r_1}^\varkappa
		}
	\right)^{1 / p - 1}
	\\
	&
	{}
	\times
	f^{1 / p}
	\left(
		\frac{
			1
		}{
			\operatorname{mes} Q_{2 R}^\varkappa
		}
		\int_{Q_{r_1}^\varkappa}
		|u|
		dx
		dt
	\right).
\end{align*}
Since
$$
	\left(
		\frac{
			\operatorname{mes} Q_{2 R}^\varkappa
		}{
			\operatorname{mes} Q_{r_2}^\varkappa \setminus Q_{r_1}^\varkappa
		}
	\right)^{1 / p - 1}
	\ge
	1,
$$
this readily implies~\eqref{L3.3.2}.
\end{proof}

\begin{lemma}\label{L3.4}
Suppose that $p \ge 1$, then 
\begin{equation}
	F_\varkappa
	\left(
		\left(
			\frac{
				\sigma
			}{
				R^{n + \varkappa}
			}
			\int_{Q_R^\varkappa}
			|u|^p
			dx
			dt
		\right)^{1 / p}
	\right)
	\ge
	C R,
	\label{L3.4.1}
\end{equation}
for any real number $R > 0$ and $\varkappa > 0$ such that
$
	\operatorname{mes}
	\{
		(x, t) \in Q_R^\varkappa
		:
		u (x, t) \ne 0
	\}
	>
	0.
$
\end{lemma}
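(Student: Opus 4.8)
The plan is to run a doubling iteration on the radius, using Lemma~\ref{L3.2} to bound how fast the normalized quantity $I$ from~\eqref{L3.2.3} can grow over the annulus $[R,2R]$, and then to recognize the resulting sum as a Riemann sum for the integrals in $F_\varkappa$. First I would note that it suffices to prove $F_\varkappa((\theta_0/2)^{1/p})\ge CR$, where $\theta_0:=I(R)$: the hypothesis $\operatorname{mes}\{(x,t)\in Q_R^\varkappa:u\ne0\}>0$ gives $\theta_0>0$, while $\operatorname{mes}Q_{2R}^\varkappa/R^{n+\varkappa}$ is a constant depending only on $n,\varkappa$, so $\theta_0/2=\frac{\sigma}{R^{n+\varkappa}}\int_{Q_R^\varkappa}|u|^p\,dx\,dt$ for a suitable such $\sigma$. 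The function $r\mapsto I(r)$ is continuous and nondecreasing on $[R,2R]$ (dominated convergence, using $u\in L_{p,loc}$) and $I(2R)<\infty$. Put $\theta_j=2^j\theta_0$ and let $j_0$ be the largest index with $\theta_{j_0}\le I(2R)$ — finite since $\theta_j\to\infty$. For $0\le j\le j_0$ set $r_j=\min\{r\in[R,2R]:I(r)\ge\theta_j\}$; then $I(r_j)=\theta_j$ by continuity, $r_0=R$, and $r_{j-1}<r_j$.

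Next, for each $0\le j<j_0$ I would apply Lemma~\ref{L3.2} with $r_1=r_j$, $r_2=r_{j+1}$. Since $I(r_{j+1})-I(r_j)=\theta_j$ and $f$ is nondecreasing, whichever of~\eqref{L3.2.1},~\eqref{L3.2.2} holds, solving it for $r_{j+1}-r_j$ gives
$$
	r_{j+1}-r_j
	\le
	\frac{C^{-1/(kp)}}{R^{\varkappa-1}}\,\frac{\theta_j^{1/(kp)}}{f^{1/k}(\theta_j^{1/p})}
	+
	C^{-1/m}\,\frac{\theta_j^{1/m}}{f^{1/m}(\theta_j^{1/p})}.
$$
The same estimate with $j=j_0$ bounds $2R-r_{j_0}$, now applying Lemma~\ref{L3.2} to $r_1=r_{j_0}$, $r_2=2R$ and using $I(2R)-I(r_{j_0})<\theta_{j_0+1}-\theta_{j_0}=\theta_{j_0}$. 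These increments telescope from $R$ up to $2R$, so summing yields
$$
	R
	\le
	\frac{C^{-1/(kp)}}{R^{\varkappa-1}}
	\sum_{j\ge0}\frac{\theta_j^{1/(kp)}}{f^{1/k}(\theta_j^{1/p})}
	+
	C^{-1/m}
	\sum_{j\ge0}\frac{\theta_j^{1/m}}{f^{1/m}(\theta_j^{1/p})}.
$$

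Then I would identify the two series with the integrals defining $F_\varkappa$. The substitution $\zeta=t^{1/p}$ turns $\int_{(\theta_0/2)^{1/p}}^\infty\zeta^{1/k-1}f^{-1/k}(\zeta)\,d\zeta$ into $\frac1p\int_{\theta_0/2}^\infty t^{1/(kp)-1}f^{-1/k}(t^{1/p})\,dt$, and similarly for the other integral; the point of the substitution is that the integrands $t^{1/(kp)-1}f^{-1/k}(t^{1/p})$ and $t^{1/m-1}f^{-1/m}(t^{1/p})$ are nonincreasing on $(0,\infty)$ — here $kp\ge1$ (since $p\ge1$), $m\ge1$, and $f$ is nondecreasing are used. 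Comparing with the geometric sequence $\theta_j$ (and using $\int_{\theta_0/2}^{\theta_0}$ to absorb the $j=0$ term) gives
$$
	\sum_{j\ge0}\frac{\theta_j^{1/(kp)}}{f^{1/k}(\theta_j^{1/p})}
	\le
	2p\int_{(\theta_0/2)^{1/p}}^\infty\frac{\zeta^{1/k-1}\,d\zeta}{f^{1/k}(\zeta)}
	=:2pA_1,
	\quad
	\sum_{j\ge0}\frac{\theta_j^{1/m}}{f^{1/m}(\theta_j^{1/p})}
	\le
	2p\int_{(\theta_0/2)^{1/p}}^\infty\frac{\zeta^{p/m-1}\,d\zeta}{f^{1/m}(\zeta)}
	=:2pA_2,
$$
where $A_1,A_2<\infty$ by~\eqref{1.4} and~\eqref{1.5}. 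The previous display then reads $R\le C'R^{1-\varkappa}A_1+C'A_2$, so either $R^\varkappa\le2C'A_1$ or $R\le2C'A_2$; in both cases $R\le C''(A_1^{1/\varkappa}+A_2)=C''F_\varkappa((\theta_0/2)^{1/p})$, which gives~\eqref{L3.4.1}.

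The main obstacle I anticipate is the bookkeeping of the Riemann-sum step: one must perform the substitution that makes the integrand monotone, handle the $j=0$ term without losing constants, and — most importantly — check that the exponent $1/\varkappa$ in the definition of $F_\varkappa$ exactly undoes the factor $R^{1-\varkappa}$ inherited from~\eqref{L3.2.1}, which is precisely what allows both alternatives in the final chain ($R^\varkappa\le C A_1$ versus $R\le C A_2$) to yield the same bound $R\le C\,F_\varkappa(\cdot)$. A secondary point worth care is justifying that the iteration can be closed off at $r=2R$ (i.e. that $j_0$ is finite and that the last interval $(r_{j_0},2R)$ may legitimately be fed into Lemma~\ref{L3.2}), which is where $u\in L_{p,loc}$ and the continuity of $I$ enter.
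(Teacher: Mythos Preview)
Your proof is correct and follows essentially the same doubling scheme as the paper: partition $[R,2R]$ at the radii where $I$ doubles, feed each subinterval into Lemma~\ref{L3.2}, and recognize the resulting sums as the two integrals in $F_\varkappa$. The only cosmetic difference is that the paper first splits the index set according to which of~\eqref{L3.2.1},~\eqref{L3.2.2} holds and then argues that one of the two partial sums of increments is at least $R/2$, whereas you bound each increment by the sum of the two alternatives and split the final inequality $R\le C'R^{1-\varkappa}A_1+C'A_2$ at the end; both routes give the same estimate.
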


\begin{proof}
Take the minimal positive integer $l$ such that $2^l I (R) \ge I (2 R)$,
where the function $I$ is defined by~\eqref{L3.2.3}.
We construct a finite sequence of real numbers $r_i$, $i = 0, \ldots, l$, by putting
$r_0 = R$, 
$$
	r_i 
	=
	\sup 
	\{
		r \in (r_{i - 1}, 2 R)
		:
		I (r) \le 2 I (r_{i - 1})
	\},
	\quad
	0 < i \le l - 1,
$$
and  $r_l = 2 R$.
It can easily be seen that 
\begin{equation}
	I (r_i) = 2 I (r_{i-1})
	\quad
	\mbox{for all } 0 < i \le l - 1
	\quad
	\mbox{and}
	\quad
	I (r_l) \le 2 I (r_{l-1}).
	\label{PL3.4.6}
\end{equation}
According to Lemma~\ref{L3.2}, for any integer $0 < i \le l$ at least one of the following two inequalities is valid:
\begin{equation}
	I (r_i) - I (r_{i-1})
	\ge
	C
	(r_i - r_{i-1})^{k p}
	R^{k p (\varkappa - 1)}
	f^p (I^{1 / p} (r_{i-1})),
	\label{PL3.4.1}
\end{equation}
\begin{equation}
	I (r_i) - I (r_{i-1})
	\ge
	C
	(r_i - r_{i-1})^m
	f (I^{1 / p} (r_{i-1})).
	\label{PL3.4.2}
\end{equation}

We denote by $\Xi_1$ the set of all integers $0 < i \le l$ for which~\eqref{PL3.4.1} holds.
Also let $\Xi_2$ be the set of all other integers $0 < i \le l$.
If $i \in \Xi_1$, then~\eqref{PL3.4.6} and~\eqref{PL3.4.1} imply that
$$
	\frac{
		I^{1 / (k p)} (r_{i-1})
	}{
		f^{1 / k} (I^{1 / p} (r_{i-1}))
	}
	\ge
	C
	(r_i - r_{i-1})
	R^{\varkappa - 1},
$$
whence in accordance with the evident inequality
$$
	\int_{
		I (r_{i-1})
	}^{
		2 I (r_{i-1})
	}
	\frac{
		\zeta^{1 / (k p) - 1} 
		d\zeta
	}{
		f^{1 / k} (2^{- 1 / p} \zeta^{1 / p})
	}
	\ge
	\frac{
		2^{1 / (k p) - 1} 
		I^{1 / (k p)} (r_{i-1})
	}{
		f^{1 / k} (I^{1 / p} (r_{i-1}))
	}
$$
we obtain
\begin{equation}
	\int_{
		I (r_{i-1})
	}^{
		2 I (r_{i-1})
	}
	\frac{
		\zeta^{1 / (k p) - 1} 
		d\zeta
	}{
		f^{1 / k} (2^{- 1 / p} \zeta^{1 / p})
	}
	\ge
	C
	(r_i - r_{i-1})
	R^{\varkappa - 1},
	\label{PL3.4.3}
\end{equation}
In its turn, if $i \in \Xi_2$, then~\eqref{PL3.4.6} and~\eqref{PL3.4.2} allow us to assert that
$$
	\frac{
		I^{1 / m} (r_{i-1})
	}{
		f^{1 / m} (I^{1 / p} (r_{i-1}))
	}
	\ge
	C
	(r_i - r_{i-1}).
$$
Since
$$
	\int_{
		I (r_{i-1})
	}^{
		2 I (r_{i-1})
	}
	\frac{
		\zeta^{1 / m - 1}
		d\zeta
	}{
		f^{1 / m} (2^{- 1 / p} \zeta^{1 / p})
	}
	\ge
	\frac{
		2^{1 / m - 1}
		I^{1 / m} (r_{i-1})
	}{
		f^{1 / m} (I^{1 / p} (r_{i-1}))
	},
$$
this readily implies the estimate
\begin{equation}
	\int_{
		I (r_{i-1})
	}^{
		2 I (r_{i-1})
	}
	\frac{
		\zeta^{1 / m - 1}
		d\zeta
	}{
		f^{1 / m} (2^{- 1 / p} \zeta^{1 / p})
	}
	\ge
	C
	(r_i - r_{i-1}).
	\label{PL3.4.4}
\end{equation}

At first, we assume that
\begin{equation}
	\sum_{i \in \Xi_1}
	(r_i - r_{i-1})
	\ge
	\frac{R}{2}.
	\label{PL3.4.5}
\end{equation}
Summing~\eqref{PL3.4.3} over all $i \in \Xi_1$, we obtain
$$
	\int_{
		I (R)
	}^\infty
	\frac{
		\zeta^{1 / (k p) - 1} 
		d\zeta
	}{
		f^{1 / k} (2^{- 1 / p} \zeta^{1 / p})
	}
	\ge
	C
	R^\varkappa.
$$
By the change of variable $\xi = 2^{- 1 / p} \zeta^{1/p}$, the last expression is reduced to the form
$$
	\int_{
		(I (R) / 2)^{1 / p}
	}^\infty
	\frac{
		\xi^{1 / k - 1}
		d\xi
	}{
		f^{1 / k} (\xi)
	}
	\ge
	C
	R^\varkappa.
$$
This, in its turn, implies~\eqref{L3.4.1}.

Now let~\eqref{PL3.4.5} not hold. Then 
\begin{equation}
	\sum_{i \in \Xi_2}
	(r_i - r_{i-1})
	\ge
	\frac{R}{2};
	\label{PL3.4.7}
\end{equation}
therefore, summing~\eqref{PL3.4.4} over all $i \in \Xi_2$, we have
$$
	\int_{
		I (R)
	}^\infty
	\frac{
		\zeta^{1 / m - 1}
		d\zeta
	}{
	f^{1 / m} (2^{- 1 / p} \zeta^{1 / p})
	}
	\ge
	C R.
$$
By the change of variable $\xi = 2^{- 1 / p} \zeta^{1/p}$, this can be transformed to the inequality
$$
	\int_{
		(I (R) / 2)^{1 / p}
	}^\infty
	\frac{
		\xi^{p / m - 1}
		d\xi
	}{
		f^{1 / m} (\xi)
	}
	\ge
	C R,
$$
whence~\eqref{L3.4.1} follows again. The proof is completed.
\end{proof}

\begin{lemma}\label{L3.5}
Suppose that $0 < p < 1$, then 
\begin{equation}
	F_\varkappa
	\left(
		\frac{
			\sigma
		}{
			R^{n + \varkappa}
		}
		\int_{Q_R^\varkappa}
		|u|
		dx
		dt
	\right)
	\ge
	C R
	\label{L3.5.1}
\end{equation}
for any real number $R > 0$ and $\varkappa > 0$ such that
$
	\operatorname{mes}
	\{
		(x, t) \in Q_R^\varkappa
		:
		u (x, t) \ne 0
	\}
	>
	0.
$
\end{lemma}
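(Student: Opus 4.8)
The plan is to reproduce the argument of Lemma~\ref{L3.4} almost verbatim, now invoking Lemma~\ref{L3.3} in place of Lemma~\ref{L3.2} and working with the functional $J$ of~\eqref{L3.3.3} instead of $I$. First I would fix real numbers $R > 0$ and $\varkappa > 0$ with $\operatorname{mes}\{(x,t) \in Q_R^\varkappa : u(x,t) \ne 0\} > 0$; then $0 < J(R) \le J(2R) < \infty$ since $u \in L_{1,loc}({\mathbb R}^{n+1}_+)$, so the minimal positive integer $l$ with $2^l J(R) \ge J(2R)$ exists. As in Lemma~\ref{L3.4} I would set $r_0 = R$, $r_i = \sup\{r \in (r_{i-1}, 2R) : J(r) \le 2 J(r_{i-1})\}$ for $0 < i \le l-1$, and $r_l = 2R$. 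Monotonicity and absolute continuity of $r \mapsto J(r)$ give $J(r_i) = 2 J(r_{i-1})$ for $0 < i \le l-1$ and $J(r_l) \le 2 J(r_{l-1})$, so that $J(r_i) - J(r_{i-1}) \le J(r_{i-1})$ for every $0 < i \le l$, while the intervals $[J(r_{i-1}), 2 J(r_{i-1})]$ with $i \le l-1$ are consecutive and, together with the last one, are pairwise disjoint and contained in $[J(R), 2 J(r_{l-1})] \subset [J(R), \infty)$.

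By Lemma~\ref{L3.3}, each $0 < i \le l$ satisfies~\eqref{L3.3.1} or~\eqref{L3.3.2} with $(r_1, r_2) = (r_{i-1}, r_i)$; let $\Xi_1$ be the set of indices of the first type and $\Xi_2$ the rest. For $i \in \Xi_1$, combining~\eqref{L3.3.1} with $J(r_i) - J(r_{i-1}) \le J(r_{i-1})$ yields $J^{1/k}(r_{i-1}) / f^{1/k}(J(r_{i-1})) \ge C (r_i - r_{i-1}) R^{\varkappa-1}$, and since $f$ is non-decreasing,
$$
	\int_{J(r_{i-1})}^{2 J(r_{i-1})} \frac{\zeta^{1/k - 1}\, d\zeta}{f^{1/k}(\zeta/2)} \ge \frac{C\, J^{1/k}(r_{i-1})}{f^{1/k}(J(r_{i-1}))} \ge C (r_i - r_{i-1}) R^{\varkappa - 1}.
$$
For $i \in \Xi_2$, the same manipulation applied to~\eqref{L3.3.2} gives $J^{p/m}(r_{i-1}) / f^{1/m}(J(r_{i-1})) \ge C (r_i - r_{i-1})$, hence
$$
	\int_{J(r_{i-1})}^{2 J(r_{i-1})} \frac{\zeta^{p/m - 1}\, d\zeta}{f^{1/m}(\zeta/2)} \ge C (r_i - r_{i-1}).
$$

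Since $\sum_{i=1}^l (r_i - r_{i-1}) = R$, at least one of $\Xi_1$, $\Xi_2$ has $\sum (r_i - r_{i-1}) \ge R/2$. Summing the corresponding family of the two displayed inequalities over that set of indices, and using non-negativity of the integrands together with the disjointness noted above, I would obtain either $\int_{J(R)}^\infty \zeta^{1/k-1} f^{-1/k}(\zeta/2)\, d\zeta \ge C R^\varkappa$ or $\int_{J(R)}^\infty \zeta^{p/m-1} f^{-1/m}(\zeta/2)\, d\zeta \ge C R$. In either case the substitution $\xi = \zeta/2$ converts this into $\int_{J(R)/2}^\infty \xi^{1/k-1} f^{-1/k}(\xi)\, d\xi \ge C R^\varkappa$ or $\int_{J(R)/2}^\infty \xi^{p/m-1} f^{-1/m}(\xi)\, d\xi \ge C R$, each of which yields $F_\varkappa(J(R)/2) \ge C R$. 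Choosing $\sigma$ so that $\sigma R^{-(n+\varkappa)} \int_{Q_R^\varkappa} |u|\, dx\, dt = J(R)/2$ — which is possible with $\sigma$ depending only on $n$ and $\varkappa$, since $\operatorname{mes} Q_{2R}^\varkappa = 2^{n+\varkappa} R^{n+\varkappa} \operatorname{mes} B_1$ — then produces~\eqref{L3.5.1}. The structure is identical to that of Lemma~\ref{L3.4}, so there is no genuine obstacle; the only points requiring attention are the exponent bookkeeping in the change of variables and the use of the monotonicity of $f$ to bound $f(\zeta/2)$ from above by $f(J(r_{i-1}))$ on the relevant interval.
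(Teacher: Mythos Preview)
Your proof is correct and follows essentially the same route as the paper's own proof of Lemma~\ref{L3.5}: the same dyadic partition via $J$, the same dichotomy $\Xi_1/\Xi_2$ from Lemma~\ref{L3.3}, the same integral estimates and change of variable $\xi = \zeta/2$. You are slightly more explicit about the disjointness of the intervals and the identification of $\sigma$, but the argument is identical in substance.
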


\begin{proof}
We take the minimal positive integer $l$ such that $2^l J (R) \ge J (2 R)$, where the function $J$ is defined by~\eqref{L3.3.3}.
Consider the sequence of real numbers $r_i$, $i = 0, \ldots, l$, constructed in the proof of Lemma~\ref{L3.4} with $I$ replaced by $J$.
We obviously have 
\begin{equation}
	J (r_i) = 2 J (r_{i-1})
	\quad
	\mbox{for all } 0 < i \le l - 1
	\quad
	\mbox{and}
	\quad
	J (r_l) \le 2 J (r_{l-1}).
	\label{PL3.5.1}
\end{equation}
In view of Lemma~\ref{L3.3}, for any integer $0 < i \le l$ at least one of the following two inequalities is valid:
\begin{equation}
	J (r_i) - J (r_{i-1})
	\ge
	C
	(r_i - r_{i-1})^k
	R^{k (\varkappa - 1)}
	f (J (r_{i-1})),
	\label{PL3.5.2}
\end{equation}
\begin{equation}
	J (r_i) - J (r_{i-1})
	\ge
	C
	(r_i - r_{i-1})^{m / p}
	f^{1 / p} (J (r_{i-1})).
	\label{PL3.5.3}
\end{equation}

We denote by $\Xi_1$ the set of integers $0 < i \le l$ such that~\eqref{PL3.5.2} holds and let $\Xi_2$ be the set of all other positive integers $0 < i \le l$.
For any $i \in \Xi_1$, it follows from~\eqref{PL3.5.1} and~\eqref{PL3.5.2} that
$$
	\frac{
		J^{1 / k} (r_{i-1})
	}{
		f^{1 / k} (J (r_{i-1}))
	}
	\ge
	C
	(r_i - r_{i-1})
	R^{\varkappa - 1}.
$$
Combining this with the evident inequality
$$
	\int_{
		J (r_{i-1})
	}^{
		2 J (r_{i-1})
	}
	\frac{
		\zeta^{1 / k - 1}
		d\zeta
	}{
		f^{1 / k} (\zeta / 2)
	}
	\ge
	\frac{
		2^{1 / k - 1}
		J^{1 / k} (r_{i-1})
	}{
		f^{1 / k} (J (r_{i-1}))
	},
$$
we obtain
\begin{equation}
	\int_{
		J (r_{i-1})
	}^{
		2 J (r_{i-1})
	}
	\frac{
		\zeta^{1 / k - 1}
		d\zeta
	}{
		f^{1 / k} (\zeta / 2)
	}
	\ge
	C
	(r_i - r_{i-1})
	R^{\varkappa - 1}.
	\label{PL3.5.4}
\end{equation}
In its turn, for any $i \in \Xi_2$ in accordance with~\eqref{PL3.5.1} and~\eqref{PL3.5.3} we have
$$
	\frac{
		J^{p / m} (r_{i-1})
	}{
		f^{1 / m} (J (r_{i-1}))
	}
	\ge
	C
	(r_i - r_{i-1}).
$$
In view of the estimate
$$
	\int_{
		J (r_{i-1})
	}^{
		2 J (r_{i-1})
	}
	\frac{
		\zeta^{p / m - 1}
		d\zeta
	}{
		f^{1 / m} (\zeta / 2)
	}
	\ge
	\frac{
		2^{p / m - 1}
		J^{p / m} (r_{i-1})
	}{
		f^{1 / m} (J (r_{i-1}))
	},
$$
this implies that
\begin{equation}
	\int_{
		J (r_{i-1})
	}^{
		2 J (r_{i-1})
	}
	\frac{
		\zeta^{p / m - 1}
		d\zeta
	}{
		f^{1 / m} (\zeta / 2)
	}
	\ge
	C
	(r_i - r_{i-1}).
	\label{PL3.5.5}
\end{equation}

At first, let~\eqref{PL3.4.5} be fulfilled. Summing~\eqref{PL3.5.4} over all $i \in \Xi_1$, we obtain
$$
	\int_{
		J (R)
	}^\infty
	\frac{
		\zeta^{1 / k - 1}
		d\zeta
	}{
		f^{1 / k} (\zeta / 2)
	}
	\ge
	C
	R^\varkappa.
$$
After the change of variable $\xi = \zeta / 2$, the last expression takes the form
$$
	\int_{
		J (R) / 2
	}^\infty
	\frac{
		\xi^{1 / k - 1}
		d\xi
	}{
		f^{1 / k} (\xi)
	}
	\ge
	C
	R^\varkappa,
$$
whence~\eqref{L3.5.1} immediately follows.

Now, assume that~\eqref{PL3.4.5} is not valid. In this case,~\eqref{PL3.4.7} holds and, summing~\eqref{PL3.5.5} over all $i \in \Xi_2$, we arrive at the inequality
$$
	\int_{
		J (R)
	}^\infty
	\frac{
		\zeta^{p / m - 1}
		d\zeta
	}{
		f^{1 / m} (\zeta / 2)
	}
	\ge
	C
	R
$$
which, by the change of variable $\xi = \zeta / 2$, can be transformed to
$$
	\int_{
		J (R) / 2
	}^\infty
	\frac{
		\xi^{p / m - 1}
		d\xi
	}{
		f^{1 / m} (\xi)
	}
	\ge
	C
	R.
$$
This again leads us to~\eqref{L3.5.1}.
\end{proof}

\begin{lemma}\label{L3.6}
Suppose that $p \ge 1$, then 
\begin{align}
	\frac{
		1
	}{
		R^{k \varkappa - m}
	}
	\left(
		\frac{
			1
		}{
			\operatorname{mes} Q_R^\varkappa \setminus Q_{R / 2}^\varkappa
		}
		\int_{
			Q_R^\varkappa \setminus Q_{R / 2}^\varkappa
		}
		|u|^p
		dx
		dt
	\right)^{1 / p}
	&
	+
	\frac{
		1
	}{
		\operatorname{mes} Q_R^\varkappa \setminus Q_{R / 2}^\varkappa
	}
	\int_{
		Q_R^\varkappa \setminus Q_{R / 2}^\varkappa
	}
	|u|^p
	dx
	dt
	\nonumber
	\\
	&
	{}
	\ge
	\frac{
		C
	}{
		R^{n + \varkappa - m}
	}
	\int_{Q_{R / 2}^\varkappa}
	f (|u|)
	dx
	dt
	\label{PT2.1.3}
\end{align}
for all real numbers $R > 0$ and $\varkappa > 0$.
\end{lemma}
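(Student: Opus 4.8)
The plan is to obtain~\eqref{PT2.1.3} as a specialization of Lemma~\ref{L3.1}, followed by a passage from the $L_1$-norm of $|u|$ to its $L_p$-norm by means of H\"older's inequality; it is exactly this last step that uses the hypothesis $p \ge 1$.

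First I would apply Lemma~\ref{L3.1} with the triple $(R, r_1, r_2)$ occurring there replaced by $(R/2, R/2, R)$, which is admissible since $R/2 \le R/2 < R \le 2 \cdot (R/2)$. Using $R^\varkappa - (R/2)^\varkappa = (1 - 2^{-\varkappa}) R^\varkappa$ and $R - R/2 = R/2$ to absorb the $2$-powers into the constant, \eqref{L3.1.1} becomes
$$
	\frac{C}{R^{k \varkappa}}
	\int_{(R/2)^\varkappa}^{R^\varkappa}
	\int_{B_R}
	|u|
	\, dx \, dt
	+
	\frac{C}{R^m}
	\int_0^{R^\varkappa}
	\int_{B_R \setminus B_{R/2}}
	|u|^p
	\, dx \, dt
	\ge
	\int_{Q_{R/2}^\varkappa}
	f (|u|)
	\, dx \, dt.
$$
Next I would note that both $B_R \times ((R/2)^\varkappa, R^\varkappa)$ and $(B_R \setminus B_{R/2}) \times (0, R^\varkappa)$ are contained in $Q_R^\varkappa \setminus Q_{R/2}^\varkappa$, so each of the two integrals on the left can be enlarged to an integral over $Q_R^\varkappa \setminus Q_{R/2}^\varkappa$ without disturbing the inequality. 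Writing $M = \operatorname{mes} Q_R^\varkappa \setminus Q_{R/2}^\varkappa$ and observing that $M = C R^{n + \varkappa}$ with a constant depending only on $n$ and $\varkappa$, I would divide through by $M$ and apply H\"older's (equivalently, Jensen's) inequality $M^{-1} \int_{Q_R^\varkappa \setminus Q_{R/2}^\varkappa} |u| \le (M^{-1} \int_{Q_R^\varkappa \setminus Q_{R/2}^\varkappa} |u|^p)^{1/p}$, valid because $p \ge 1$, to replace the first term by $C R^{-k \varkappa} (M^{-1} \int_{Q_R^\varkappa \setminus Q_{R/2}^\varkappa} |u|^p)^{1/p}$.

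Finally, multiplying the resulting inequality by $R^m$ and using $R^m / R^{k\varkappa} = R^{-(k\varkappa - m)}$ together with $R^m / M = C R^{m - n - \varkappa}$ turns the left-hand side into a constant (possibly different for the two terms) times the left-hand side of~\eqref{PT2.1.3} and the right-hand side into $C R^{m - n - \varkappa} \int_{Q_{R/2}^\varkappa} f (|u|) \, dx \, dt$; absorbing the constants (using that $C_1 X_1 + C_2 X_2 \ge Y$ forces $X_1 + X_2 \ge Y / \max(C_1, C_2)$) yields~\eqref{PT2.1.3}. I do not anticipate a genuine obstacle: the proof is a bookkeeping exercise in powers of $R$ and in deciding on which cylinder each integral is taken. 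The only point deserving care is the direction of the inequalities under this bookkeeping, since enlarging the domains of integration and applying H\"older must only weaken the \emph{left}-hand side, which is consistent with the desired lower bound.
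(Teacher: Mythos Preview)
Your proposal is correct and follows essentially the same route as the paper: apply Lemma~\ref{L3.1} with $r_1=R/2$, $r_2=R$, enlarge the two integration domains to $Q_R^\varkappa\setminus Q_{R/2}^\varkappa$, use H\"older's inequality to pass from the $L_1$-average of $|u|$ to its $L_p$-average, and then invoke the measure estimate $\operatorname{mes}(Q_R^\varkappa\setminus Q_{R/2}^\varkappa)\sim R^{n+\varkappa}$ to balance the powers of $R$. The only cosmetic difference is that the paper applies H\"older before dividing by the measure, whereas you divide first and then apply Jensen; the computations are equivalent.
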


\begin{proof}
Lemma~\ref{L3.1} with $r_1 = R / 2$ and $r_2 = R$ yields
\begin{equation}
	\frac{
		1
	}{
		R^{k \varkappa}
	}
	\int_{
		Q_R^\varkappa \setminus Q_{R / 2}^\varkappa
	}
	|u|
	dx
	dt
	+
	\frac{
		1
	}{
		R^m
	}
	\int_{
		Q_R^\varkappa \setminus Q_{R / 2}^\varkappa
	}
	|u|^p
	dx
	dt
	\ge
	C
	\int_{
		Q_{R / 2}^\varkappa
	}
	f (|u|)
	dx
	dt
	\label{PT2.1.5}
\end{equation}
for all real numbers $R > 0$ and $\varkappa > 0$, whence in accordance with H\"older's inequality
$$
	(\operatorname{mes} Q_R^\varkappa \setminus Q_{R / 2}^\varkappa)^{(p - 1) / p}
	\left(
		\int_{
			Q_R^\varkappa \setminus Q_{R / 2}^\varkappa
		}
		|u|^p
		dx
		dt
	\right)^{1 / p}
	\ge
	\int_{
		Q_R^\varkappa \setminus Q_{R / 2}^\varkappa
	}
	|u|
	dx
	dt
$$
and the fact that
\begin{equation}
	\sigma R^{n + \varkappa} 
	\le 
	\operatorname{mes} Q_R^\varkappa \setminus Q_{R / 2}^\varkappa
	\le 
	C R^{n + \varkappa}
	\label{PL3.6.1}
\end{equation}
estimate~\eqref{PT2.1.3} follows at once.
\end{proof}

\begin{lemma}\label{L3.7}
Assume that $0 < p < 1$, then
\begin{align}
	\frac{
		1
	}{
		\operatorname{mes} Q_R^\varkappa \setminus Q_{R / 2}^\varkappa
	}
	\int_{
		Q_R^\varkappa \setminus Q_{R / 2}^\varkappa
	}
	|u|
	dx
	dt
	&
	+
	\frac{
		1
	}{
		R^{m - k \varkappa}
	}
	\left(
		\frac{
			1
		}{
			\operatorname{mes} Q_R^\varkappa \setminus Q_{R / 2}^\varkappa
		}
		\int_{
			Q_R^\varkappa \setminus Q_{R / 2}^\varkappa
		}
		|u|
		dx
		dt
	\right)^p
	\nonumber
	\\
	&
	{}
	\ge
	\frac{
		C
	}{
		R^{n - (k - 1) \varkappa}
	}
	\int_{Q_{R / 2}^\varkappa}
	f (|u|)
	dx
	dt
	\label{PT2.2.3}
\end{align}
for all real numbers $R > 0$ and $\varkappa > 0$.
\end{lemma}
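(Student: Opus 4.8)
The plan is to follow the proof of Lemma~\ref{L3.6} almost verbatim, interchanging the roles played by $\int |u|$ and $\int |u|^p$ in the H\"older step. First I would apply Lemma~\ref{L3.1} with $r_1 = R/2$ and $r_2 = R$, exactly as in the proof of Lemma~\ref{L3.6}; this gives inequality~\eqref{PT2.1.5}, namely
$$
	\frac{1}{R^{k \varkappa}}
	\int_{Q_R^\varkappa \setminus Q_{R/2}^\varkappa}
	|u| \, dx \, dt
	+
	\frac{1}{R^m}
	\int_{Q_R^\varkappa \setminus Q_{R/2}^\varkappa}
	|u|^p \, dx \, dt
	\ge
	C
	\int_{Q_{R/2}^\varkappa}
	f (|u|) \, dx \, dt .
$$
Dividing this inequality by $R^{n - (k - 1) \varkappa}$ produces on the right-hand side exactly the power of $R$ appearing in~\eqref{PT2.2.3}, so it remains to bound the two resulting terms on the left from above by the two summands on the left of~\eqref{PT2.2.3}.

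For the first term the prefactor becomes $R^{-(n + \varkappa)}$, and~\eqref{PL3.6.1} (which in particular gives $\operatorname{mes} Q_R^\varkappa \setminus Q_{R/2}^\varkappa \le C R^{n + \varkappa}$) immediately replaces it by $C (\operatorname{mes} Q_R^\varkappa \setminus Q_{R/2}^\varkappa)^{-1}$, yielding the first summand of~\eqref{PT2.2.3}. For the second term I would use H\"older's inequality with exponent $1/p > 1$,
$$
	\int_{Q_R^\varkappa \setminus Q_{R/2}^\varkappa}
	|u|^p \, dx \, dt
	\le
	(\operatorname{mes} Q_R^\varkappa \setminus Q_{R/2}^\varkappa)^{1 - p}
	\left(
		\int_{Q_R^\varkappa \setminus Q_{R/2}^\varkappa}
		|u| \, dx \, dt
	\right)^p ,
$$
and then split $(\operatorname{mes} Q_R^\varkappa \setminus Q_{R/2}^\varkappa)^{1 - p} = (\operatorname{mes} Q_R^\varkappa \setminus Q_{R/2}^\varkappa)\,(\operatorname{mes} Q_R^\varkappa \setminus Q_{R/2}^\varkappa)^{-p}$, estimating the single positive power by $C R^{n + \varkappa}$ through~\eqref{PL3.6.1} while keeping $(\operatorname{mes} Q_R^\varkappa \setminus Q_{R/2}^\varkappa)^{-p}$ inside the $p$-th power. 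Since $(n + \varkappa) - \bigl(m + n - (k - 1) \varkappa\bigr) = -(m - k \varkappa)$, this turns $R^{-(m + n - (k-1)\varkappa)} \int |u|^p$ into $C R^{-(m - k\varkappa)} \bigl( (\operatorname{mes} Q_R^\varkappa \setminus Q_{R/2}^\varkappa)^{-1} \int |u| \bigr)^p$, i.e. the second summand of~\eqref{PT2.2.3}. Adding the two estimates gives~\eqref{PT2.2.3}.

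I do not expect a genuine obstacle here: the argument is entirely parallel to Lemma~\ref{L3.6}, the only subtlety being the bookkeeping of the exponents of $R$. One must split the power $1 - p$ of $\operatorname{mes} Q_R^\varkappa \setminus Q_{R/2}^\varkappa$ correctly, so that exactly one factor of the measure is converted into $R^{n+\varkappa}$ via~\eqref{PL3.6.1} and the remaining power $-p$ is absorbed into the $p$-th power; this is precisely what makes the exponents $m - k\varkappa$ and $n - (k-1)\varkappa$ emerge as stated.
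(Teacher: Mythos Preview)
Your proposal is correct and follows essentially the same route as the paper's proof: apply Lemma~\ref{L3.1} with $r_1=R/2$, $r_2=R$ to obtain~\eqref{PT2.1.5}, bound the second summand via H\"older's inequality $\int |u|^p \le (\operatorname{mes})^{1-p}\bigl(\int |u|\bigr)^p$, and convert measure factors into powers of $R$ through~\eqref{PL3.6.1}. The paper records these three steps in a single sentence, while you spell out the exponent bookkeeping explicitly; there is no substantive difference.
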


\begin{proof}
In view of Lemma~\ref{L3.1}, relation~\eqref{PT2.1.5} holds for all real numbers $R > 0$ and $\varkappa > 0$. 
Evaluating the second summand on the left in~\eqref{PT2.1.5} by H\"older's inequality 
$$
	(\operatorname{mes} Q_R^\varkappa \setminus Q_{R / 2}^\varkappa)^{1 - p}
	\left(
		\int_{
			Q_R^\varkappa \setminus Q_{R / 2}^\varkappa
		}
		|u|
		dx
		dt
	\right)^p
	\ge
	\int_{
		Q_R^\varkappa \setminus Q_{R / 2}^\varkappa
	}
	|u|^p
	dx
	dt
$$
and using estimate~\eqref{PL3.6.1}, we readily obtain~\eqref{PT2.2.3}.
\end{proof}

\begin{lemma}\label{L3.8}
Let $p \ge 1$, then for all real numbers $R > 0$ and $\varkappa > 0$ 
at least one of the following two systems of inequalities is valid:
\begin{equation}
	\left\{
		\begin{aligned}
			&
			\left(
				\frac{
					1
				}{
					\operatorname{mes} Q_R^\varkappa \setminus Q_{R / 2}^\varkappa
				}
				\int_{
					Q_R^\varkappa \setminus Q_{R / 2}^\varkappa
				}
				|u|^p
				dx	
				dt
			\right)^{1 / p}
			\ge
			\frac{
				\sigma
			}{
			R^{n - (k - 1) \varkappa}
			}
			\int_{Q_{R / 2}^\varkappa}
			f (|u|)
			dx
			dt,
			\\
			&
			E (R) - E (R / 2)
			\ge
			C
			R^{n + \varkappa}
			f 
			\left(
				\frac{
					\sigma E (R / 2)
				}{
					R^{n - (k - 1) \varkappa}
				}
			\right),
		\end{aligned}
	\right.
	\label{L3.8.1}
\end{equation}
\begin{equation}
	\left\{
		\begin{aligned}
			&
			\frac{
				1
			}{
				\operatorname{mes} Q_R^\varkappa \setminus Q_{R / 2}^\varkappa
			}
			\int_{
				Q_R^\varkappa \setminus Q_{R / 2}^\varkappa
			}
			|u|^p
			dx
			dt
			\ge
			\frac{
				\sigma
			}{
				R^{n + \varkappa - m}
			}
			\int_{Q_{R / 2}^\varkappa}
			f (|u|)
			dx
			dt,
			\\
			&
			E (R) - E (R / 2)
			\ge
			C
			R^{n + \varkappa}
			f 
			\left(
				\left(
					\frac{
						\sigma E (R / 2)
					}{
						R^{n + \varkappa - m}
					}
				\right)^{1 / p}
			\right),
		\end{aligned}
	\right.
	\label{L3.8.2}
\end{equation}
where
\begin{equation}
	E (r)
	=
	\int_{Q_r^\varkappa}
	f (|u|)
	dx
	dt,
	\quad
	r > 0.
	\label{E}
\end{equation}
\end{lemma}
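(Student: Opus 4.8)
The plan is to extract the required dichotomy directly from Lemma~\ref{L3.6} by a pigeonhole argument, and then to upgrade each of the two alternatives to the corresponding second inequality by means of Jensen's inequality.

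\emph{Step 1: the dichotomy.} I would first apply Lemma~\ref{L3.6} with the given $R > 0$ and $\varkappa > 0$. Abbreviating
$$
	M
	=
	\frac{
		1
	}{
		\operatorname{mes} Q_R^\varkappa \setminus Q_{R / 2}^\varkappa
	}
	\int_{
		Q_R^\varkappa \setminus Q_{R / 2}^\varkappa
	}
	|u|^p
	\, dx
	\, dt
$$
and recalling that $E (R / 2) = \int_{Q_{R / 2}^\varkappa} f (|u|) \, dx \, dt$ by~\eqref{E}, estimate~\eqref{PT2.1.3} becomes
$$
	\frac{
		M^{1 / p}
	}{
		R^{k \varkappa - m}
	}
	+
	M
	\ge
	\frac{
		C
	}{
		R^{n + \varkappa - m}
	}
	E (R / 2).
$$
Hence at least one of the two summands on the left is not less than half the right-hand side. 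If the first summand dominates, then, after simplifying the power of $R$ and noting that $(k \varkappa - m) - (n + \varkappa - m) = (k - 1) \varkappa - n$, one obtains
$$
	M^{1 / p}
	\ge
	\frac{
		\sigma E (R / 2)
	}{
		R^{n - (k - 1) \varkappa}
	},
$$
which is exactly the first inequality in~\eqref{L3.8.1}; if the second summand dominates, one obtains
$$
	M
	\ge
	\frac{
		\sigma E (R / 2)
	}{
		R^{n + \varkappa - m}
	},
$$
the first inequality in~\eqref{L3.8.2}.

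\emph{Step 2: the energy increment.} Since $Q_{R / 2}^\varkappa \subset Q_R^\varkappa$, we have $E (R) - E (R / 2) = \int_{Q_R^\varkappa \setminus Q_{R / 2}^\varkappa} f (|u|) \, dx \, dt$. Because $f (\zeta^{1 / p})$ is convex and non-decreasing on $[0, \infty)$, Jensen's inequality on the set $Q_R^\varkappa \setminus Q_{R / 2}^\varkappa$ gives
$$
	\frac{
		1
	}{
		\operatorname{mes} Q_R^\varkappa \setminus Q_{R / 2}^\varkappa
	}
	\int_{
		Q_R^\varkappa \setminus Q_{R / 2}^\varkappa
	}
	f (|u|)
	\, dx
	\, dt
	\ge
	f \bigl( M^{1 / p} \bigr).
$$
Combining this with the lower bound $\operatorname{mes} Q_R^\varkappa \setminus Q_{R / 2}^\varkappa \ge \sigma R^{n + \varkappa}$ from~\eqref{PL3.6.1} and with the monotonicity of $f$, in the first case of Step~1 I would insert the estimate for $M^{1 / p}$ just obtained to conclude
$$
	E (R) - E (R / 2)
	\ge
	C R^{n + \varkappa}
	f \left(
		\frac{
			\sigma E (R / 2)
		}{
			R^{n - (k - 1) \varkappa}
		}
	\right),
$$
and in the second case, using $M^{1 / p} \ge \bigl( \sigma E (R / 2) / R^{n + \varkappa - m} \bigr)^{1 / p}$ (valid since $p \ge 1$), to conclude
$$
	E (R) - E (R / 2)
	\ge
	C R^{n + \varkappa}
	f \left(
		\left(
			\frac{
				\sigma E (R / 2)
			}{
				R^{n + \varkappa - m}
			}
		\right)^{1 / p}
	\right).
$$
Thus the first alternative yields~\eqref{L3.8.1} and the second yields~\eqref{L3.8.2}.

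This argument is essentially bookkeeping once Lemmas~\ref{L3.1} and~\ref{L3.6} are in hand; the only points requiring a little care are the exact power of $R$ produced by~\eqref{PT2.1.3} in the first alternative and the reconciliation of the several constants written $\sigma$. Since $f$ is non-decreasing, replacing any $\sigma$ by a smaller positive constant only weakens the inequalities, so a single value of $\sigma$ can be chosen that makes both inequalities of each system hold simultaneously. I do not expect any serious obstacle here; the real work of the section lies in Lemmas~\ref{L3.4}--\ref{L3.7}.
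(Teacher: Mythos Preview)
Your proof is correct and follows essentially the same route as the paper's: the paper also splits into two cases according to which of the two summands in~\eqref{PT2.1.3} dominates (the paper phrases this as assuming or not assuming~\eqref{PL3.8.1}, which is the same pigeonhole), then applies Jensen's inequality~\eqref{PL3.8.2} for the convex function $f(\zeta^{1/p})$ together with the measure bound~\eqref{PL3.6.1} and the monotonicity of $f$ to obtain the second inequality in each system. The only cosmetic difference is that you compare each summand to half the right-hand side while the paper compares the two summands to each other; the arithmetic and the use of the auxiliary lemmas are identical.
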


\begin{proof}
Let
\begin{align}
	&
	\frac{
		1
	}{
		R^{k \varkappa - m}
	}
	\left(
		\frac{
			1
		}{
			\operatorname{mes} Q_R^\varkappa \setminus Q_{R / 2}^\varkappa
		}
		\int_{
			Q_R^\varkappa \setminus Q_{R / 2}^\varkappa
		}
		|u|^p
		dx
		dt
	\right)^{1 / p}
	\nonumber
	\\
	&
	\qquad
	{}
	\ge
	\frac{
		1
	}{
		\operatorname{mes} Q_R^\varkappa \setminus Q_{R / 2}^\varkappa
	}
	\int_{
		Q_R^\varkappa \setminus Q_{R / 2}^\varkappa
	}
	|u|^p
	dx
	dt.
	\label{PL3.8.1}
\end{align}
Applying Lemma~\ref{L3.6}, we have the first inequality in~\eqref{L3.8.1}, whence it follows that
$$
	f
	\left(
		\left(
			\frac{
				1
			}{
				\operatorname{mes} Q_R^\varkappa \setminus Q_{R / 2}^\varkappa
			}
			\int_{
				Q_R^\varkappa \setminus Q_{R / 2}^\varkappa
			}
			|u|^p
			dx
			dt
		\right)^{1 / p}
	\right)
	\ge
	f
	\left(
		\frac{
			\sigma
		}{
			R^{n - (k - 1) \varkappa}
		}
		\int_{Q_{R / 2}^\varkappa}
		f (|u|)
		dx
		dt
	\right)
$$
since $f$ is a non-decreasing function.
Combining this with the estimate
\begin{align}
	&
	\frac{
		1
	}{
		\operatorname{mes} Q_R^\varkappa \setminus Q_{R / 2}^\varkappa
	}
	\int_{
		Q_R^\varkappa
		\setminus
		Q_{R / 2}^\varkappa
	}
	f (|u|)
	dx
	dt
	\nonumber
	\\
	&
	\qquad
	{}
	\ge
	f
	\left(
		\left(
			\frac{
				1
			}{
				\operatorname{mes} Q_R^\varkappa \setminus Q_{R / 2}^\varkappa
			}
			\int_{
				Q_R^\varkappa
				\setminus
				Q_{R / 2}^\varkappa
			}
			|u|^p
			dx
			dt
		\right)^{1 / p}
	\right)
	\label{PL3.8.2}
\end{align}
which is valid as $f (\zeta^{1/p})$ is a convex function, we obtain
$$
	\frac{
		1
	}{
		\operatorname{mes} Q_R^\varkappa \setminus Q_{R / 2}^\varkappa
	}
	\int_{
		Q_R^\varkappa
		\setminus
		Q_{R / 2}^\varkappa
	}
	f (|u|)
	dx
	dt
	\ge
	f
	\left(
		\frac{
			\sigma
		}{
			R^{n - (k - 1) \varkappa}
		}
		\int_{Q_{R / 2}^\varkappa}
		f (|u|)
		dx
		dt
	\right).
$$
In view of~\eqref{PL3.6.1}, the last expression is equivalent to the second inequality in~\eqref{L3.8.1}.

Now, assume that~\eqref{PL3.8.1} is not satisfied. In this case, Lemma~\ref{L3.6} implies the fist inequality in~\eqref{L3.8.2} which, in turn, allows us to assert that
\begin{align*}
	&
	f
	\left(
		\left(
			\frac{
				1
			}{
				\operatorname{mes} Q_R^\varkappa \setminus Q_{R / 2}^\varkappa
			}
			\int_{
				Q_R^\varkappa \setminus Q_{R / 2}^\varkappa
			}
			|u|^p
			dx
			dt
		\right)^{1 / p}
	\right)
	\\
	&
	\qquad
	{}
	\ge
	f
	\left(
		\left(
			\frac{
				\sigma
			}{
				R^{n + \varkappa - m}
			}
			\int_{Q_{R / 2}^\varkappa}
			f (|u|)
			dx
			dt
		\right)^{1 / p}
	\right).
\end{align*}
Combining this with~\eqref{PL3.8.2}, we have
$$
	\frac{
		1
	}{
		\operatorname{mes} Q_R^\varkappa \setminus Q_{R / 2}^\varkappa
	}
	\int_{
		Q_R^\varkappa
		\setminus
		Q_{R / 2}^\varkappa
	}
	f (|u|)
	dx
	dt
	\ge
	f
	\left(
		\left(
			\frac{
				\sigma
			}{
				R^{n + \varkappa - m}
			}
			\int_{Q_{R / 2}^\varkappa}
			f (|u|)
			dx
			dt
		\right)^{1 / p}
	\right),
$$
whence in accordance with~\eqref{PL3.6.1} the second inequality in~\eqref{L3.8.2} follows at once.
\end{proof}

\begin{lemma}\label{L3.9}
Let $0 < p < 1$, then for all real numbers $R > 0$ and $\varkappa > 0$ at least one of the following two systems of inequalities is valid:
$$
%\begin{equation}
	\left\{
		\begin{aligned}
			&
			\frac{
				1
			}{
				\operatorname{mes} Q_R^\varkappa \setminus Q_{R / 2}^\varkappa
			}
			\int_{
				Q_R^\varkappa \setminus Q_{R / 2}^\varkappa
			}
			|u|
			dx
			dt
			\ge
			\frac{
				\sigma
			}{
				R^{n - (k - 1) \varkappa}
			}
			\int_{Q_{R / 2}^\varkappa}
			f (|u|)
			dx
			dt,
			\\
			&
			E (R) - E (R / 2)
			\ge
			C
			R^{n + \varkappa}
			f 
			\left(
				\frac{
					\sigma E (R / 2)
				}{
					R^{n - (k - 1) \varkappa}
				}
			\right),
		\end{aligned}
	\right.
%	\label{L3.9.1}
%\end{equation}
$$
$$
%\begin{equation}
	\left\{
		\begin{aligned}
			&
			\left(
				\frac{
					1
				}{
					\operatorname{mes} Q_R^\varkappa \setminus Q_{R / 2}^\varkappa
				}
				\int_{
					Q_R^\varkappa \setminus Q_{R / 2}^\varkappa
				}
				|u|
				dx
				dt
			\right)^p
			\ge
			\frac{
				\sigma
			}{
				R^{n + \varkappa - m}
			}
			\int_{Q_{R / 2}^\varkappa}
			f (|u|)
			dx
			dt,
			\\
			&
			E (R) - E (R / 2)
			\ge
			C
			R^{n + \varkappa}
			f 
			\left(
				\left(
					\frac{
						\sigma E (R / 2)
					}{
						R^{n + \varkappa - m}
					}
				\right)^{1 / p}
			\right),
		\end{aligned}
	\right.
%	\label{L3.9.2}
%\end{equation}
$$
where the function $E$ is defined by~\eqref{E}.
\end{lemma}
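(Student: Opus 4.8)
The plan is to transcribe the proof of Lemma~\ref{L3.8} with the adjustments dictated by the hypothesis $0<p<1$: Lemma~\ref{L3.6} is replaced by Lemma~\ref{L3.7}; the roles of the averages of $|u|$ and of $|u|^p$ are interchanged (for $0<p<1$ it is the first-power average of $|u|$ that appears unweighted on the left of Lemma~\ref{L3.7}); and Jensen's inequality will be invoked for the convex function $f$ itself rather than for $f(\zeta^{1/p})$. Throughout I abbreviate $M=\frac{1}{\operatorname{mes}(Q_R^\varkappa\setminus Q_{R/2}^\varkappa)}\int_{Q_R^\varkappa\setminus Q_{R/2}^\varkappa}|u|\,dx\,dt$, and I will use from the start that $E(R)-E(R/2)=\int_{Q_R^\varkappa\setminus Q_{R/2}^\varkappa}f(|u|)\,dx\,dt$ and that $\operatorname{mes}(Q_R^\varkappa\setminus Q_{R/2}^\varkappa)$ is comparable to $R^{n+\varkappa}$ by~\eqref{PL3.6.1}.

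First I split into two cases according to which of the two summands on the left-hand side of the inequality furnished by Lemma~\ref{L3.7} is the larger. In the first case, when $M\ge R^{k\varkappa-m}M^p$, Lemma~\ref{L3.7} at once yields $M\ge \sigma R^{-(n-(k-1)\varkappa)}E(R/2)$, i.e. the first inequality of the first system. I then apply the non-decreasing function $f$ to both sides and afterwards use Jensen's inequality for the convex function $f$, namely $\frac{1}{\operatorname{mes}(Q_R^\varkappa\setminus Q_{R/2}^\varkappa)}\int_{Q_R^\varkappa\setminus Q_{R/2}^\varkappa}f(|u|)\,dx\,dt\ge f(M)$; combining the two estimates gives $\frac{1}{\operatorname{mes}(Q_R^\varkappa\setminus Q_{R/2}^\varkappa)}\int_{Q_R^\varkappa\setminus Q_{R/2}^\varkappa}f(|u|)\,dx\,dt\ge f\big(\sigma E(R/2)/R^{n-(k-1)\varkappa}\big)$. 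Multiplying through by $\operatorname{mes}(Q_R^\varkappa\setminus Q_{R/2}^\varkappa)\ge \sigma R^{n+\varkappa}$ and recalling $\int_{Q_R^\varkappa\setminus Q_{R/2}^\varkappa}f(|u|)\,dx\,dt=E(R)-E(R/2)$ then produces the second inequality of the first system.

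In the complementary case, when $M< R^{k\varkappa-m}M^p$, Lemma~\ref{L3.7} gives $R^{k\varkappa-m}M^p\ge \sigma R^{-(n-(k-1)\varkappa)}E(R/2)$, and after collecting the powers of $R$, using $-(k\varkappa-m)-(n-(k-1)\varkappa)=-(n+\varkappa-m)$, this becomes $M^p\ge \sigma R^{-(n+\varkappa-m)}E(R/2)$, which is the first inequality of the second system. Applying the non-decreasing $f$ yields $f(M)\ge f\big((\sigma E(R/2)/R^{n+\varkappa-m})^{1/p}\big)$, and then the same Jensen step for $f$ together with multiplication by $\operatorname{mes}(Q_R^\varkappa\setminus Q_{R/2}^\varkappa)\ge\sigma R^{n+\varkappa}$ gives the second inequality of the second system. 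Since the two cases are exhaustive, at least one of the two systems always holds, which is the assertion of the lemma.

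I do not anticipate a genuine obstacle; the argument is essentially a mirror image of the proof of Lemma~\ref{L3.8}. The two points requiring attention are: (i) invoking Jensen's inequality through the convexity of $f$ rather than of $f(\zeta^{1/p})$, which is precisely what is available and needed here because for $0<p<1$ Lemma~\ref{L3.7} is stated in terms of the first-power average $M$; and (ii) keeping the exponents of $R$ straight when passing from the $R^{m-k\varkappa}$ normalisation appearing in Lemma~\ref{L3.7} to the $R^{n+\varkappa-m}$ normalisation appearing in the statement of the lemma.
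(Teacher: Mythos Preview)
Your proposal is correct and follows exactly the approach the paper indicates: the paper's proof of Lemma~\ref{L3.9} says only that it is ``completely analogous to the proof of Lemma~\ref{L3.8}'' with Lemma~\ref{L3.6} replaced by Lemma~\ref{L3.7}, and you have faithfully carried out that substitution, including the correct switch to Jensen's inequality for the convex function $f$ itself (applied to the first-power average $M$) and the correct bookkeeping of the exponents of $R$.
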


\begin{proof}
The proof is completely analogous to the proof of Lemma~\ref{L3.8}. The only difference is that Lemma~\ref{L3.6} should be replaced by Lemma~\ref{L3.7}.
\end{proof}

\section{Proof of Theorems~\ref{T2.0}--\ref{T2.3}}\label{proof}

In this section, as in the previous one, by $C$ and $\sigma$ we denote various positive constants that can depend only on $A$, $k$, $m$, $n$, and $p$, unless otherwise stated.

\begin{proof}[Proof of Theorem~\ref{T2.0}]
Let us take $\varkappa = m / k$.
At first, we consider the case where $p \ge 1$. For any non-trivial global weak solution of~\eqref{1.1i}, \eqref{1.1c} in accordance with the evident estimate 
$$
	\sigma R^{n + \varkappa} 
	\le 
	\operatorname{mes} Q_R^\varkappa
	\le 
	C R^{n + \varkappa}
$$
Lemma~\ref{L3.4} implies that
$$
	\lim_{R \to \infty}
	\frac{
		1
	}{
		\operatorname{mes} Q_R^\varkappa
	}
	\int_{Q_R^\varkappa}
	|u|^p
	dx
	dt
	=
	0.
$$
In do doing, Lemma~\ref{L3.6} yields
\begin{align*}
	\left(
		\frac{
			1
		}{
			\operatorname{mes} Q_R^\varkappa \setminus Q_{R / 2}^\varkappa
		}
		\int_{
			Q_R^\varkappa \setminus Q_{R / 2}^\varkappa
		}
		|u|^p
		dx
		dt
	\right)^{1 / p}
	&
	+
	\frac{
		1
	}{
		\operatorname{mes} Q_R^\varkappa \setminus Q_{R / 2}^\varkappa
	}
	\int_{
		Q_R^\varkappa \setminus Q_{R / 2}^\varkappa
	}
	|u|^p
	dx
	dt
	\\
	&
	{}
	\ge
	\frac{
		C
	}{
		R^{n - m (1 - 1 / k)}
	}
	\int_{Q_{R / 2}^\varkappa}
	f (|u|)
	dx
	dt
\end{align*}
for all real numbers $R > 0$. Passing in the last expression to the limit as $R \to \infty$, we have
$$
	\int_{
		{\mathbb R}_+^{n+1}
	}
	f (|u|)
	dx
	dt
	=
	0.
$$
Since $f (\zeta) > 0$ for all $\zeta \in (0, \infty)$, this implies that $u$ is equal to zero almost everywhere in ${\mathbb R}_+^{n+1}$. Thus, we arrive at a contradiction with our assumption that $u$ is a non-trivial solution.

For $0 < p < 1$, all the arguments are completely analogous. The only difference is that Lemmas~\ref{L3.4} and~\ref{L3.6} should be replaced by  Lemmas~\ref{L3.5} and~\ref{L3.7}, respectively.
\end{proof}

\begin{proof}[Proof of Theorem~\ref{T2.1}]
Assume the converse. 
Let $u$ be a non-trivial global weak solution of~\eqref{1.1i}, \eqref{1.1c}.
We put
\begin{equation}
	\varkappa
	=
	\frac{
		n (p - 1) + m
	}{
		1 + (k - 1) p
	}.
	\label{varkappa}
\end{equation}
Since $p > 1 - m / n$ by the hypotheses of the theorem, one can assert that $\varkappa > 0$.
It can also be verified that $\varkappa$ satisfies the system of equations
\begin{equation}
	\left\{
		\begin{aligned}
			&
			n + \varkappa - \gamma (n - (k - 1) \varkappa) = 0,
			\\
			&
			n + \varkappa - \frac{\gamma}{p} (n + \varkappa - m) = 0,
		\end{aligned}
		\label{PT2.2.1new}
	\right.
\end{equation}
where $\gamma$ is defined by~\eqref{T2.1.2}. 
In particular, from~\eqref{PT2.2.1new}, it follows that
\begin{equation}
	\frac{
		n + \varkappa - m
	}{
		p
	}
	=
	n - (k - 1) \varkappa.
	\label{connection}
\end{equation}

We denote
\begin{equation}
	h (\zeta)
	=
	\frac{
		f (\zeta)
	}{
		\zeta^\gamma
	},
	\quad
	\zeta > 0.
	\label{h}
\end{equation}
Take a real number $r_0 > 0$ such that $E (r_0) > 0$, where the function $E$ is defined by~\eqref{E}.
Also let $r_i = 2^i r_0$, $i = 1,2,\ldots$.

From now on, we assume that the constants $C$ and $\sigma$ can also depend on $E (r_0)$. 

At first, we consider the case of $p \ge 1$.
Let us establish the validity of the estimate
\begin{equation}
	\frac{
		E (r_{i+1}) - E (r_i)
	}{
		E^{\gamma / p} (r_i)
	}
	\ge
	C
	h 
	\left(
		\frac{
			\sigma E^{1 / p} (r_i)
		}{
			r_i^{n - (k - 1) \varkappa}
		}
	\right)
	\label{PT2.2.4new}
\end{equation}
for all $i = 0,1,2,\ldots$. 
Indeed, by Lemma~\ref{L3.8}, for any integer $i \ge 0$ at least one of the following two systems of inequalities is satisfied:
\begin{equation}
	\left\{
		\begin{aligned}
			&
			\left(
				\frac{
					1
				}{
					\operatorname{mes} Q_{r_{i+1}}^\varkappa \setminus Q_{r_i}^\varkappa
				}
				\int_{
					Q_{r_{i+1}}^\varkappa \setminus Q_{r_i}^\varkappa
				}
				|u|^p
				dx	
				dt
			\right)^{1 / p}
			\ge
			\frac{
				\sigma
			}{
			r_i^{n - (k - 1) \varkappa}
			}
			\int_{Q_{r_i}^\varkappa}
			f (|u|)
			dx
			dt,
			\\
			&
			E (r_{i+1}) - E (r_i)
			\ge
			C
			r_i^{n + \varkappa}
			f 
			\left(
				\frac{
					\sigma E (r_i)
				}{
					r_i^{n - (k - 1) \varkappa}
				}
			\right),
		\end{aligned}
	\right.
	\label{PT2.2.2new}
\end{equation}
\begin{equation}
	\left\{
		\begin{aligned}
			&
			\frac{
				1
			}{
				\operatorname{mes} Q_{r_{i+1}}^\varkappa \setminus Q_{r_i}^\varkappa
			}
			\int_{
				Q_{r_{i+1}}^\varkappa \setminus Q_{r_i}^\varkappa
			}
			|u|^p
			dx
			dt
			\ge
			\frac{
				\sigma
			}{
				r_i^{n + \varkappa - m}
			}
			\int_{Q_{r_i}^\varkappa}
			f (|u|)
			dx
			dt,
			\\
			&
			E (r_{i+1}) - E (r_i)
			\ge
			C
			r_i^{n + \varkappa}
			f 
			\left(
				\left(
					\frac{
						\sigma E (r_i)
					}{
						r_i^{n + \varkappa - m}
					}
				\right)^{1 / p}
			\right).
		\end{aligned}
	\right.
	\label{PT2.2.3new}
\end{equation}

Let~\eqref{PT2.2.2new} be valid for some  integer $i \ge 0$.
Since $f$ is a non-decreasing function, the second inequality in~\eqref{PT2.2.2new} implies that
$$
	E (r_{i+1}) - E (r_i)
	\ge
	C
	r_i^{n + \varkappa}
	f 
	\left(
		\frac{
			\sigma E^{1 - 1 / p} (r_0) E^{1 / p} (r_i)
		}{
			r_i^{n - (k - 1) \varkappa}
		}
	\right),
$$
whence in accordance with the first equation in~\eqref{PT2.2.1new}, we readily obtain~\eqref{PT2.2.4new}.

In its turn, if~\eqref{PT2.2.3new} is valid for some integer $i \ge 0$, then the second inequality in~\eqref{PT2.2.3new} yields
$$
	E (r_{i+1}) - E (r_i)
	\ge
	C
	r_i^{n + \varkappa}
	f 
	\left(
		\frac{
			\sigma E^{1 / p} (r_i)
		}{
			r_i^{(n + \varkappa - m) / p}
		}
	\right),
$$
whence in accordance with~\eqref{connection} and the second equation in~\eqref{PT2.2.1new} we have~\eqref{PT2.2.4new}.

It can be seen that
\begin{equation}
	\left(
		\frac{
			1
		}{
			\operatorname{mes} Q_{r_{i+1}}^\varkappa \setminus Q_{r_i}^\varkappa
		}
		\int_{
			Q_{r_{i+1}}^\varkappa \setminus Q_{r_i}^\varkappa
		}
		|u|^p
		dx	
		dt
	\right)^{1 / p}
	\ge	
	\frac{
		\sigma E^{1 / p} (r_i)
	}{
		r_i^{n - (k - 1) \varkappa}
	}
	\label{PT2.2.5new}
\end{equation}
for all $i = 0,1,2,\ldots$.
Indeed, in the case where~\eqref{PT2.2.2new} holds, estimate~\eqref{PT2.2.5new} follows from the first inequality in~\eqref{PT2.2.2new} and the fact that 
$$
	\int_{Q_{r_i}^\varkappa}
	f (|u|)
	dx
	dt
	\ge 
	E^{1 - 1 / p} (r_0) E^{1 / p} (r_i).
$$
In its turn, if~\eqref{PT2.2.3new} holds, then~\eqref{PT2.2.5new} follows from~\eqref{connection} and the first inequality in~\eqref{PT2.2.3new}.

By Lemma~\ref{L3.4} and estimate~\eqref{PL3.6.1}, the left-hand side of~\eqref{PT2.2.5new} tends to zero as $i \to \infty$. Thus, passing in~\eqref{PT2.2.5new} to the limit as $i \to \infty$, we obtain
\begin{equation}
	\lim_{i \to \infty}
	\frac{
		E^{1 / p} (r_i)
	}{
		r_i^{n - (k - 1) \varkappa}
	}
	=
	0.
	\label{new}
\end{equation}

Let us denote
$
	\lambda = n - (k - 1) \varkappa
$
and
$$
	\zeta_i = E^{1 / p} (r_i), 
	\quad
	i = 0,1,2,\ldots.
$$
Taking into account~\eqref{varkappa} and the condition $n > m (1 - 1 / k)$, we have
$$
	\lambda
	=
	\frac{
		k (n - m (1 - 1 / k))
	}{
		1 + (k - 1) p
	}
	>
	0.
$$
There are sequences of integers $0 \le s_i < l_i \le s_{i+1}$, $i = 1,2,\ldots$, such that
$$
	\frac{
		\zeta_j
	}{
		r_j^\lambda
	}
	>
	\frac{
		\zeta_{j+1}
	}{
		r_{j+1}^\lambda
	}
$$
if 
\begin{equation}
	j 
	\in 
	\bigcup_{i=1}^\infty [s_i, l_i)
	\label{newPT2.1.1}
\end{equation}
and
$$
	\frac{
		\zeta_j
	}{
		r_j^\lambda
	}
	\le
	\frac{
		\zeta_{j+1}
	}{
		r_{j+1}^\lambda
	}
$$
if~\eqref{newPT2.1.1} is not fulfilled.
Since $E (r_{j+1}) \ge E (r_j)$ for all $j = 1,2,\ldots$, we have
\begin{equation}
	\frac{
		2^\lambda
		\zeta_{j+1}
	}{
		r_{j+1}^\lambda
	}
	\ge
	\frac{
		\zeta_j
	}{
		r_j^\lambda
	}
	>
	\frac{
		\zeta_{j+1}
	}{
		r_{j+1}^\lambda
	}
	\label{newPT2.1.2}
\end{equation}
for all integers $j$ satisfying~\eqref{newPT2.1.1}.
It can also be seen that
\begin{equation}
	2^\lambda \zeta_j > \zeta_{j+1} \ge \zeta_j
	\label{newPT2.1.3}
\end{equation}
for all integers $j$ satisfying~\eqref{newPT2.1.1}.

Estimate~\eqref{PT2.2.4new} can obviously be written as
\begin{equation}
	\frac{
		\zeta_{j+1}^p - \zeta_j^p
	}{
		\zeta_j^\gamma
	}
	\ge
	C
	h 
	\left(
		\frac{
			\sigma \zeta_j
		}{
			r_j^\lambda
		}
	\right),
	\quad
	j = 1,2,\ldots,
	\label{PT2.2.12new}
\end{equation}
whence in accordance with~\eqref{newPT2.1.3} it follows that
$$
	\frac{
		\zeta_{j+1} - \zeta_j
	}{
		\zeta_j^{\gamma - p + 1}
	}
	\ge
	C
	h 
	\left(
		\frac{
			\sigma \zeta_j
		}{
			r_j^\lambda
		}
	\right)
$$
for all integers $j$ satisfying~\eqref{newPT2.1.1}.
Multiplying this by the inequality
$$
	1 
	\ge
	\frac{
		\zeta_j / r_j^\lambda - \zeta_{j+1} / r_{j+1}^\lambda
	}{	
		\zeta_j / r_j^\lambda
	},
$$
we obtain
\begin{equation}
	\frac{
		\zeta_{j+1} - \zeta_j
	}{
		\zeta_j^{\gamma - p + 1}
	}
	\ge
	C
	\frac{
		h (\sigma \zeta_j / r_j^\lambda)
	}{
		\zeta_j / r_j^\lambda
	}
	\left(
		\frac{
			\zeta_j
		}{
			r_j^\lambda
		}
		- 
		\frac{
			\zeta_{j+1}
		}{
			r_{j+1}^\lambda
		}
	\right)
	\label{newPT2.1.4}
\end{equation}
for all integers $j$ satisfying~\eqref{newPT2.1.1}. 

In view of~\eqref{newPT2.1.3}, we have
$$
	\int_{
		\zeta_j
	}^{
		\zeta_{j+1}
	}
	\frac{
		d\zeta
	}{
		\zeta^{
			\gamma - p + 1
		}
	}
	\ge
	C
	\frac{
		\zeta_{j+1} - \zeta_j
	}{
		\zeta_j^{\gamma - p + 1}
	}
$$
for all integers $j$ satisfying~\eqref{newPT2.1.1}.
In turn,~\eqref{newPT2.1.2} allows us to assert that
$$
	\frac{
		h (\sigma \zeta_j / r_j^\lambda)
	}{
		\zeta_j / r_j^\lambda
	}
	\left(
		\frac{
			\zeta_j
		}{
			r_j^\lambda
		}
		- 
		\frac{
			\zeta_{j+1}
		}{
			r_{j+1}^\lambda
		}
	\right)
	\ge
	C
	\int_{
		\zeta_{j+1} / r_{j+1}^\lambda
	}^{
		\zeta_j / r_j^\lambda
	}
	\frac{
		\tilde h (\sigma \zeta)
	}{
		\zeta
	}
	d\zeta
$$
for all integers $j$ satisfying~\eqref{newPT2.1.1}, where
$$
%\begin{equation}
	\tilde h (\zeta)
	=
	\inf_{
		(
			\zeta, 
			\, 
			2^\lambda
			\zeta
		)
	}
	h.
%	\label{newPT2.1.5}
%\end{equation}
$$
Thus,~\eqref{newPT2.1.4} implies the estimate
$$
	\int_{
		\zeta_j
	}^{
		\zeta_{j+1}
	}
	\frac{
		d\zeta
	}{
		\zeta^{
			\gamma - p + 1
		}
	}
	\ge
	C
	\int_{
		\zeta_{j+1} / r_{j+1}^\lambda
	}^{
		\zeta_j / r_j^\lambda
	}
	\frac{
		\tilde h (\sigma \zeta)
	}{
		\zeta
	}
	d\zeta
$$
for all integers $j$ satisfying~\eqref{newPT2.1.1}, whence it follows that
$$
	\sum_{i=1}^\infty
	\int_{
		\zeta_{s_i}
	}^{
		\zeta_{l_i}
	}
	\frac{
		d\zeta
	}{
		\zeta^{
			\gamma - p + 1
		}
	}
	\ge
	C
	\sum_{i=1}^\infty
	\int_{
		\zeta_{l_i} / r_{l_i}^\lambda  
	}^{
		\zeta_{s_i} / r_{s_i}^\lambda
	}
	\frac{
		\tilde h (\sigma \zeta)
	}{
		\zeta
	}
	d\zeta.
$$
Combining this with~\eqref{new} and the inequalities
$
	\zeta_{s_{i+1}} \ge \zeta_{l_i}
$
and
$
	\zeta_{l_i} / r_{l_i}^\lambda  
	\le
	\zeta_{s_{i+1}} /	r_{s_{i+1}}^\lambda
$,
$i = 1,2,\ldots,$
we obtain
\begin{equation}
	\int_{
		\zeta_{s_1}
	}^\infty
	\frac{
		d\zeta
	}{
		\zeta^{
			\gamma - p + 1
		}
	}
	\ge
	C
	\int_0^{
		\zeta_{s_1} / r_{s_1}^\lambda
	}
	\frac{
		\tilde h (\sigma \zeta)
	}{
		\zeta
	}
	d\zeta.
	\label{newPT2.1.6}
\end{equation}
In view of the choice of the real number $r_0$, one can assert that $E (r_{s_1}) \ge E (r_0) > 0$.
At the same time, $\gamma > p$ in accordance with~\eqref{T2.1.2}. Hence,
$$
	\int_{
		\zeta_{s_1}
	}^\infty
	\frac{
		d\zeta
	}{
		\zeta^{
			\gamma - p + 1
		}
	}
	<
	\infty.
$$
In so doing, from the monotonicity of the function $f$, it follows that
\begin{equation}
	\tilde h (\zeta)
	\ge
	\frac{
		1
	}{
		2^{\lambda \gamma}
	}
	\frac{
		f (\zeta)
	}{
		\zeta^{
			\gamma
		}
	},
	\quad
	\zeta > 0.
	\label{PT2.2.13new}
\end{equation}
Thus,~\eqref{newPT2.1.6} implies the relation
$$
	\int_0^{
		\zeta_{s_1} / r_{s_1}^\lambda
	}
	\frac{
		f (\sigma \zeta)
	}{
		\zeta^{
			1 + \gamma
		}
	}
	d\zeta
	<
	\infty
$$
which contradicts~\eqref{T2.1.1}. 

Now, we consider the case of $1 - m / n < p < 1$. Let us show that
\begin{equation}
	\frac{
		E (r_{i+1}) - E (r_i)
	}{
		E^\gamma (r_i)
	}
	\ge
	C
	h 
	\left(
		\frac{
			\sigma E (r_i)
		}{
			r_i^{n - (k - 1) \varkappa}
		}
	\right)
	\label{PT2.2.6new}
\end{equation}
for all $i = 0,1,2,\ldots$. 
Indeed, by Lemma~\ref{L3.9}, for any integer $i \ge 0$ at least one of the following two systems of inequalities is satisfied:
\begin{equation}
	\left\{
		\begin{aligned}
			&
			\frac{
				1
			}{
				\operatorname{mes} Q_{r_{i+1}}^\varkappa \setminus Q_{r_i}^\varkappa
			}
			\int_{
				Q_{r_{i+1}}^\varkappa \setminus Q_{r_i}^\varkappa
			}
			|u|
			dx
			dt
			\ge
			\frac{
				\sigma
			}{
				r_i^{n - (k - 1) \varkappa}
			}
			\int_{Q_{r_i}^\varkappa}
			f (|u|)
			dx
			dt,
			\\
			&
			E (r_{i+1}) - E (r_i)
			\ge
			C
			r_i^{n + \varkappa}
			f 
			\left(
				\frac{
					\sigma E (r_i)
				}{
					r_i^{n - (k - 1) \varkappa}
				}
			\right),
		\end{aligned}
	\right.
	\label{PT2.2.7new}
\end{equation}
\begin{equation}
	\left\{
		\begin{aligned}
			&
			\left(
				\frac{
					1
				}{
					\operatorname{mes} Q_{r_{i+1}}^\varkappa \setminus Q_{r_i}^\varkappa
				}
				\int_{
					Q_{r_{i+1}}^\varkappa \setminus Q_{r_i}^\varkappa
				}
				|u|
				dx
				dt
			\right)^p
			\ge
			\frac{
				\sigma
			}{
				r_i^{n + \varkappa - m}
			}
			\int_{Q_{r_i}^\varkappa}
			f (|u|)
			dx
			dt,
			\\
			&
			E (r_{i+1}) - E (r_i)
			\ge
			C
			r_i^{n + \varkappa}
			f 
			\left(
				\left(
					\frac{
						\sigma E (r_i)
					}{
						r_i^{n + \varkappa - m}
					}
				\right)^{1 / p}
			\right).
		\end{aligned}
	\right.
	\label{PT2.2.8new}
\end{equation}
In the case where~\eqref{PT2.2.7new} is valid for some integer $i \ge 0$, the second inequality in~\eqref{PT2.2.7new} and the first equation in~\eqref{PT2.2.1new} immediately imply~\eqref{PT2.2.6new}.
In its turn, if~\eqref{PT2.2.8new} is valid for some integer $i \ge 0$, then~\eqref{connection} and the second inequality in~\eqref{PT2.2.8new} yield
$$
	E (r_{i+1}) - E (r_i)
	\ge
	C
	r_i^{n + \varkappa}
	f 
	\left(
		\frac{
			\sigma E^{1 / p} (r_i)
		}{
			r_i^{n - (k - 1) \varkappa}
		}
	\right),
$$
whence in accordance with the monotonicity of the function $f$ and the fact that 
\begin{equation}
	E^{1 / p} (r_i) 
	\ge 
	E^{1 / p - 1} (r_0) 
	E (r_i)
	\label{PT2.2.9new}
\end{equation}
we obtain
$$
	E (r_{i+1}) - E (r_i)
	\ge
	C
	r_i^{n + \varkappa}
	f 
	\left(
		\frac{
			\sigma E (r_i)
		}{
			r_i^{n - (k - 1) \varkappa}
		}
	\right).
$$
According to the first equation in~\eqref{PT2.2.1new} and the definition of function $h$, this again leads us to~\eqref{PT2.2.6new}.

It can also be shown that
\begin{equation}
	\frac{
		1
	}{
		\operatorname{mes} Q_{r_{i+1}}^\varkappa \setminus Q_{r_i}^\varkappa
	}
	\int_{
		Q_{r_{i+1}}^\varkappa \setminus Q_{r_i}^\varkappa
	}
	|u|
	dx
	dt
	\ge
	\frac{
		\sigma
		E (r_i)
	}{
		r_i^{n - (k - 1) \varkappa}
	}
	\label{PT2.2.10new}
\end{equation}
for all $i = 0,1,2,\ldots$.
Indeed, if~\eqref{PT2.2.7new} is satisfied for some integer $i \ge 0$, then~\eqref{PT2.2.10new} follows directly from the first inequality in~\eqref{PT2.2.7new}.
In the case where~\eqref{PT2.2.8new} holds, the first inequality in~\eqref{PT2.2.8new} implies that
$$
	\frac{
		1
	}{
		\operatorname{mes} Q_{r_{i+1}}^\varkappa \setminus Q_{r_i}^\varkappa
	}
	\int_{
		Q_{r_{i+1}}^\varkappa \setminus Q_{r_i}^\varkappa
	}
	|u|
	dx
	dt
	\ge
	\frac{
		\sigma
		E^{1 / p} (r_i)
	}{
		r_i^{(n + \varkappa - m) / p}
	},
$$
whence in accordance with~\eqref{connection} and~\eqref{PT2.2.9new} we again arrive at~\eqref{PT2.2.10new}.

By Lemma~\ref{L3.5} and estimate~\eqref{PL3.6.1}, the left-hand side of~\eqref{PT2.2.10new} tends to zero as $i \to \infty$. 
Thus, passing in~\eqref{PT2.2.10new} to the limit as $i \to \infty$, one can conclude that
%$$
\begin{equation}
	\lim_{i \to \infty}
	\frac{
		E (r_i)
	}{
		r_i^{n - (k - 1) \varkappa}
	}
	=
	0.
	\label{new2}
\end{equation}
%$$

We take
$$
	\zeta_i = E (r_i), 
	\quad
	i = 0,1,2,\ldots,
$$
this time.
In so doing, let
$
	\lambda = n - (k - 1) \varkappa
$
as before.
Then inequality~\eqref{PT2.2.6new} can obviously be written in the form
\begin{equation}
	\frac{
		\zeta_{i+1} - \zeta_i
	}{
		\zeta_i^\gamma
	}
	\ge
	C
	h 
	\left(
		\frac{
			\sigma \zeta_i
		}{
			r_i^\lambda
		}
	\right),
	\quad
	i = 0,1,2,\ldots.
	\label{PT2.2.11new}
\end{equation}

Thus, repeating the proof of estimate~\eqref{newPT2.1.6} with~\eqref{PT2.2.12new} replaced by~\eqref{PT2.2.11new}, we obtain
$$
	\int_{
		\zeta_{s_1}
	}^\infty
	\frac{
		d\zeta
	}{
		\zeta^{
			\gamma
		}
	}
	\ge
	C
	\int_0^{
		\zeta_{s_1} / r_{s_1}^\lambda
	}
	\frac{
		\tilde h (\sigma \zeta)
	}{
		\zeta
	}
	d\zeta.
$$
The left-hand side of the last expression is less than infinity as
$$
	\gamma
	=
	\frac{
		n + \varkappa
	}{
		n - \varkappa (k - 1)
	}
	>
	1
$$
according to the first equation in~\eqref{PT2.2.1new}, whereas
\begin{equation}
	\int_0^{
		\zeta_{s_1} / r_{s_1}^\lambda
	}
	\frac{
		\tilde h (\sigma \zeta)
	}{
		\zeta
	}
	d\zeta
	\ge
	\frac{
		1
	}{
		2^{\lambda \gamma}
	}
	\int_0^{
		\zeta_{s_1} / r_{s_1}^\lambda
	}
	\frac{
		f (\sigma \zeta)
	}{
		\zeta^{
			1 + \gamma
		}
	}
	d\zeta
	=
	\infty
	\label{PT2.2.14new}
\end{equation}
in view of~\eqref{PT2.2.13new} and condition~\eqref{T2.1.1}.
This contradiction proves the theorem.
\end{proof}

\begin{proof}[Proof of Theorem~\ref{T2.2}]
Assume the converse. 
Let $u$ be a non-trivial global weak solution of~\eqref{1.1i}, \eqref{1.1c}
and let $q$ be some real number such that $p \le q < p / (1 - m / n)$. We put
\begin{equation}
	\varkappa
	=
	\frac{
		(p / q - 1) n + m
	}{
		1 + (k - 1) p / q
	}
	\label{varkappanew}
\end{equation}
and
\begin{equation}
	\gamma
	=
	\frac{
		n p + m q / k
	}{
		n - m (1 - 1 / k)
	}.
	\label{newgamma}
\end{equation}
It is easy to see that $\varkappa > 0$ and $\gamma > 0$.
In so doing, $\varkappa$ and $\gamma$ satisfy the system of equations
\begin{equation}
	\left\{
		\begin{aligned}
			&
			n + \varkappa - \frac{\gamma}{q} (n - (k - 1) \varkappa) = 0,
			\\
			&
			n + \varkappa - \frac{\gamma}{p} (n + \varkappa - m) = 0.
		\end{aligned}
		\label{PT2.2.1}
	\right.
\end{equation}
In particular, this system implies that
\begin{equation}
	\frac{
		n + \varkappa - m
	}{
		p
	}
	=
	\frac{
		n - (k - 1) \varkappa
	}{
		q
	}.	
	\label{PT2.2.2}
\end{equation}

Take a real number $r_0 > 0$ such that $E (r_0) > 0$, where the function $E$ is defined by~\eqref{E}. 
Also let $r_i = 2^i r_0$, $i = 1,2,\ldots$.
As in the proof of Theorem~\ref{T2.1}, we assume that the constants $C$ and $\sigma$ below can also depend on $E (r_0)$.

Let us show that relation~\eqref{new2} remains valid. Indeed, by Lemma~\ref{L3.9}, for any integer $i \ge 0$ at least one of the two systems of inequalities~\eqref{PT2.2.7new} or~\eqref{PT2.2.8new} is satisfied.
In the case where~\eqref{PT2.2.7new} holds, the first inequality in~\eqref{PT2.2.7new} implies~\eqref{PT2.2.10new}. In its turn, if~\eqref{PT2.2.8new} holds, then the first inequality in~\eqref{PT2.2.8new} yields
$$
	\left(
		\frac{
			1
		}{
			\operatorname{mes} Q_{r_{i+1}}^\varkappa \setminus Q_{r_i}^\varkappa
		}
		\int_{
			Q_{r_{i+1}}^\varkappa \setminus Q_{r_i}^\varkappa
		}
		|u|
		dx
		dt
	\right)^q
	\ge
	\frac{
		\sigma
		E^{q / p} (r_i)
	}{
		r_i^{q (n + \varkappa - m) / p}
	},
$$
whence in accordance with~\eqref{PT2.2.2} and the fact that $E^{q / p} (r_i) \ge E^{q / p - 1} (r_0) E (r_i)$, we have
\begin{equation}
	\left(
		\frac{
			1
		}{
			\operatorname{mes} Q_{r_{i+1}}^\varkappa \setminus Q_{r_i}^\varkappa
		}
		\int_{
			Q_{r_{i+1}}^\varkappa \setminus Q_{r_i}^\varkappa
		}
		|u|
		dx
		dt
	\right)^q
	\ge
	\frac{
		\sigma
		E (r_i)
	}{
		r_i^{n - (k - 1) \varkappa}
	}.
	\label{PT2.2.4}
\end{equation}
Hence, to establish the validity of~\eqref{new2}, it sufficient to note that the left-hand sides of~\eqref{PT2.2.10new} and~\eqref{PT2.2.4} tend to zero as $i \to \infty$ by Lemma~\ref{L3.5} and relation~\eqref{PL3.6.1}.

Now, we show the validity of the estimate
\begin{equation}
	\frac{
		E (r_{i+1}) - E (r_i)
	}{
		E^{\gamma / q} (r_i)
	}
	\ge
	C
	h 
	\left(
		\sigma 
		\left(
			\frac{
				E (r_i)
			}{
				r_i^{n - (k - 1) \varkappa}
			}
		\right)^{1 / q}
	\right)
	\label{PT2.2.5}
\end{equation}
for all $i = 0,1,2,\ldots$, where $h$ is defined by~\eqref{h}. 
Indeed, in view of~\eqref{new2}, there exits an integer $i_0 \ge 0$ such that 
%$$
\begin{equation}
	\frac{
		E (r_i)
	}{
		r_i^{n - (k - 1) \varkappa}
	}
	\le 
	1
	\label{PT2.2.6}
\end{equation}
%$$
for all $i \ge i_0$. Without loss of generality, it can be assumed that $i_0 = 0$; otherwise we replace $r_0$ by $r_{i_0}$.

If~\eqref{PT2.2.7new} holds, then in accordance with~\eqref{PT2.2.6} and the second inequality in~\eqref{PT2.2.7new} we obtain
\begin{equation}
	E (r_{i+1}) - E (r_i)
	\ge
	C
	r_i^{n + \varkappa}
	f 
	\left(
		\sigma 
		\left(
			\frac{
				E (r_i)
			}{
				r_i^{n - (k - 1) \varkappa}
			}
		\right)^{1 / q}
	\right).
	\label{PT2.2.8}
\end{equation}
By the first equation in~\eqref{PT2.2.1}, this immediately implies~\eqref{PT2.2.5}.
In its turn, if~\eqref{PT2.2.8new} is satisfied, then~\eqref{PT2.2.2} and the second inequality in~\eqref{PT2.2.8new} yield
$$
	E (r_{i+1}) - E (r_i)
	\ge
	C
	r_i^{n + \varkappa}
	f 
	\left(
		\frac{
			\sigma 
			E^{1 / p} (r_i)
		}{
			r_i^{(n - (k - 1) \varkappa) / q}
		}
	\right).
$$
Since $E^{1 / p} (r_i) \ge E^{1 / p - 1 / q} (r_0) E^{1 / q} (r_i)$, this again leads us to~\eqref{PT2.2.8}, whence~\eqref{PT2.2.5} follows at once. 

Let us denote
$
	\lambda = (n - (k - 1) \varkappa) / q
$
and
$$
	\zeta_i = E^{1 / q} (r_i), 
	\quad
	i = 0,1,2,\ldots.
$$
Taking into account~\eqref{varkappanew} and the condition $n > m (1 - 1 / k)$, we have
$$
	\lambda
	=
	\frac{
		k (n - m (1 - 1 / k))
	}{
		q + (k - 1) p
	}
	>
	0.
$$
In the above notation,~\eqref{PT2.2.5} can obviously be written as
\begin{equation}
	\frac{
		\zeta_{i+1}^q - \zeta_i^q
	}{
		\zeta_i^\gamma
	}
	\ge
	C
	h 
	\left(
		\frac{
			\sigma 
			\zeta_i
		}{
			r_i^\lambda
		}
	\right),
	\quad
	i = 0,1,2,\ldots.
	\label{PT2.2.7}
\end{equation}
Thus, repeating the proof of estimate~\eqref{newPT2.1.6} with~\eqref{PT2.2.12new} replaced by~\eqref{PT2.2.7}, we obtain
$$
	\int_{
		\zeta_{s_1}
	}^\infty
	\frac{
		d\zeta
	}{
		\zeta^{
			{\gamma - q + 1}
		}
	}
	\ge
	C
	\int_0^{
		\zeta_{s_1} / r_{s_1}^\lambda
	}
	\frac{
		\tilde h (\sigma \zeta)
	}{
		\zeta
	}
	d\zeta.
$$
From~\eqref{newgamma}, it follows that $\gamma \to p / (1 - m / n)$ as $q \to p / (1 - m / n)$. Therefore, the real number $q \in [p, p / (1 - m / n))$ can be chosen such that $\gamma > \mu$. In view of~\eqref{T2.2.1} and~\eqref{PT2.2.13new}, this implies~\eqref{PT2.2.14new}.
On the other hand, from the first equation in~\eqref{PT2.2.1}, it follows that $\gamma > q$; therefore,
$$
	\int_{
		\zeta_{s_1}
	}^\infty
	\frac{
		d\zeta
	}{
		\zeta^{
			{\gamma - q + 1}
		}
	}
	<
	\infty.
$$
This contradiction completes the proof.
\end{proof}

\begin{proof}[Proof of Theorem~\ref{T2.3}]
It is obvious that~\eqref{1.1i}, \eqref{1.1c} can have only trivial solutions. 
This follows from Theorem~\ref{T2.0} if $n \le m (1 - 1 / k)$ or from Theorems~\ref{T2.1} and~\ref{T2.2} if $n > m (1 - 1 / k)$.
At the same time, problem~\eqref{1.1i}, \eqref{1.1c} can not have trivial solutions. Really, if $f (0) > 0$, then for any non-negative function $\varphi \in C_0^\infty ({\mathbb R}_+^{n+1})$ with a non-empty support the right-hand side of~\eqref{1.2} is positive, whereas the left-hand side is equal to zero.
\end{proof}

\begin{remark}\label{R3.1}
Theorems~\ref{T2.0}--\ref{T2.3} remain valid if, instead of the non-negativity of the initial value $u_{k-1}$, one requires that 
$$
	U (r)
	=
	\int_{B_r}
	u_{k-1}
	dx
$$
is a non-decreasing non-negative function in a neighborhood of infinity.
In this case, we have
$$
	U' (r)
	=
	\int_{S_r}
	u_{k-1}
	d S_r
	\ge
	0
$$
for almost all $r$ in a neighborhood of infinity, where $S_r$ is the sphere of radius $r > 0$ centered at zero and $d S_r$ is the volume element on $S_r$.
Therefore, Lemma~\ref{L3.1} and all subsequent statements remain valid if the real number $R$ in their conditions is large enough. Indeed, taking the test function $\varphi$ such as in the proof of Lemma~\ref{L3.1}, we obtain that the first summand on the right in~\eqref{1.2} is non-negative. 
This is sufficient for the validity of inequality~\eqref{L3.1.1}.
\end{remark}

\bigskip
\bigskip

{\bf Acknowledgments} The work of the first author was supported by the Russian Ministry of Education and Science as part of the program of the Moscow Center for Fundamental and Applied Mathematics under the agreement 075-15-2022-284  (critical exponents),
and by the Russian Science Foundation, project 20-11-20272-$\Pi$ (estimates of global solutions).
The work of the second author was supported by the Russian Science Foundation, project 23-11-00056 (asymptotic properties of solutions).

\bigskip

\noindent
{\bf Data availability} Data sharing not applicable to this article as no datasets were generated or analyzed during the current study.

\bigskip

\noindent
{\large \bf Declarations}

\medskip

\noindent
{\bf Conflict of interest} On behalf of all authors, the corresponding author states that there is no conflict of interest.

\end{document}